\tikzset{snake it/.style={decorate, decoration=snake}}
\newcommand{\bracedincludegraphics}[2][]{%
  \sbox0{$\vcenter{\hbox{\includegraphics[#1]{#2}}}$}%
  \left\lbrace
    \vphantom{\copy0}
  \right.\kern-\nulldelimiterspace
  \underbrace{\vrule width0pt depth \dimexpr\dp0 + .3ex\relax\box0}}
\newtheorem{theorem}{Theorem}
\newtheorem{lemma}[theorem]{Lemma}
\newtheorem{proposition}[theorem]{Proposition}
\theoremstyle{definition}
\title{The invariant measure of PushASEP with a wall and point-to-line last passage percolation}
\author{Will FitzGerald\thanks{w.fitzgerald@sussex.ac.uk}}
\begin{document}

\maketitle

\abstract{
We consider an interacting particle system on the lattice involving pushing and blocking interactions, called PushASEP, in the presence of a wall at the origin.
We show that the invariant measure of this system 
is equal in distribution to a vector of point-to-line last passage percolation times in a random geometrically
distributed
environment. 
The largest co-ordinates in both of these vectors are equal in distribution to the all-time supremum of a non-colliding random walk. 
}

\paragraph{Keywords.} Interacting particle systems, non-colliding random walks, point-to-line last passage percolation, symplectic Schur functions.
\paragraph{2020 Mathematics Subjects Classifications.} 60K35, 60C05, 60J45

\section{Introduction}
The last two decades have seen remarkable progress in the study of random interface growth, interacting 
particle systems and random polymers within the Kardar-Parisi-Zhang (KPZ) universality class through the
identification of deep connections between probability, combinatorics, symmetric functions, queueing theory, random matrices and quantum integrable systems. The greatest progress has been made with narrow-wedge initial data 
(for example see
\cite{baik_deift_johansson, baryshnikov, borodin_corwin_macdonald, cosz, johansson_2, o_connell2012, prahofer_spohn, tracy_widom_asep}) and
there are substantial differences in the case of flat initial data, see  \cite{baik_rains, bisi_zygouras, bfps, ferrari, remenik_nguyen, sasamoto}.

The purpose of this paper is to prove multi-dimensional identities in law
between different models in the KPZ universality class with flat initial data.
These are closely related to identities involving reflected Brownian motions and point-to-line last passage percolation with exponential data proved recently in \cite{FW}. The results of this paper, together with \cite{FW}, suggests
the possibility that there may be more identities of this form and deeper algebraic reasons for why they hold. 

On the one hand, these identities involve an interacting particle system called PushASEP (introduced in \cite{borodin2008}) in the presence of an additional wall at the origin. This is a continuous-time Markov chain $(Y_1(t), \ldots, Y_n(t))_{t \geq 0}$
taking values in $W^n_{\geq 0} = \{(y_1, \ldots, y_n) : 0 \leq y_1 \ldots \leq y_n \text{ and } y_i \in \mathbb{Z}\}$ and
with the following evolution depending on $2n$ independent exponential clocks. Throughout we refer to the $i$-th co-ordinate as the $i$-th particle.
At rate $v_i$, the right-clock of the $i$-th particle rings and the $i$-th particle jumps to the right. All particles which have 
(before the jump of the $i$-th particle) a position equal to the $i$-th particle position and an index greater than or equal to $i$ are 
\emph{pushed} by one step to the right. At rate $v_i^{-1}$ the left-clock of the $i$-th particle rings and if the $i$-th particle has a position strictly larger than both the $(i-1)$-th particle and zero then
the $i$-th particle jumps by one step to the left; if not this jump is suppressed. In summary, particles \emph{push} particles with higher indices and are \emph{blocked} by particles with lower indices and a wall at the origin.


A second viewpoint is to relate the top particle in PushASEP with a wall 
to the top particle in an ordered (or non-colliding process),  see Proposition \ref{two_sided_intertwining} and related statements in \cite{baryshnikov, biane2005, o_connell2012, warren}.
Let $(Z_1^{(v_n)}(t), \ldots, Z_n^{(v_1)}(t))_{t \geq 0}$ be a multi-dimensional continuous-time random walk where $Z_i^{(v_{n-i+1})}$ jumps to the right with rate $v_{n-i+1}$ and to the left with 
rate $v_{n-i+1}^{-1}$. We construct from this an ordered process
$(Z^{\dagger}_1(t), \ldots, Z^{\dagger}_n(t))_{t \geq 0}$ by a Doob $h$-transform, see Section \ref{Harmonic_Functions}. 
In the case $0 < v_n < \ldots < v_1$, 
this is given by conditioning $(Z_1^{(v_n)}, \ldots, Z_n^{(v_1)})$ 
on the event of positive probability that $Z_1^{(v_n)} \leq \ldots \leq Z_n^{(v_1)}$.

The other side of the identities we prove, involve point-to-line last passage percolation times. 
Let $\Pi_n^{\text{flat}}$ denote the set of all directed (up and right) nearest neighbour paths from the point $(k, l)$
to the line $\{(i, j): i+j = n+1\}$ and 
let 
\begin{equation}
\label{lpp_defn}
G(k, l) = \max_{\pi \in \Pi_n^{\text{flat}}(k, l)} \sum_{(i, j) \in \pi} g_{ij}
\end{equation}
where $g_{ij}$ are an independent collection of geometric random variables with parameter $1- v_i v_{n-j+1}$
indexed by 
$\{(i, j) : i, j \in \mathbb{Z}_{\geq 1} \text{ and } i + j \leq n+1\}$ and
with $0 < v_i < 1$ for each $i = 1, \ldots, n$.
The geometric random variables are defined as $P(g_{ij} = k) = (1-v_i v_{n-j+1}) (v_i v_{n-j+1})^{k}$ for all $k \geq 0$.


\begin{theorem}
\label{top_particle_theorem}
Let $n \geq 1$ and suppose $0 < v_1, \ldots, v_n < 1$ and let $Y_n^*$ be distributed according to the top particle of PushASEP with a wall 
in its invariant measure, let $Z_n^{\dagger}$ be the top particle in the ordered random walk above, see also \eqref{non_colliding_defn}, and $G(1, 1)$ be the point-to-line last passage percolation time defined by \eqref{lpp_defn}. Then 
\begin{equation*}
Y_n^*  \stackrel{d}{=} \sup_{t \geq 0} Z_n^{\dagger}(t)  \stackrel{d}{=} G(1, 1).
\end{equation*}
\end{theorem}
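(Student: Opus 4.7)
The plan is to prove the two equalities in turn. For the first one, $Y_n^* \stackrel{d}{=} \sup_{t \geq 0} Z_n^{\dagger}(t)$, I would invoke the intertwining in Proposition \ref{two_sided_intertwining}, which relates the top particle of PushASEP with a wall to the top particle of the ordered walk. Concretely, I expect the intertwining to produce a coupling under which
\begin{equation*}
Y_n(t) \stackrel{d}{=} \max_{0 \leq s \leq t} Z_n^{\dagger}(s)
\end{equation*}
when both chains are started from a matched initial state (e.g.\ all coordinates at $0$). Since $0 < v_i < 1$, each walk $Z_i^{(v_{n-i+1})}$ has drift $v_{n-i+1} - v_{n-i+1}^{-1} < 0$, so $Z_n^{\dagger}$ drifts to $-\infty$ and its all-time supremum is almost surely finite. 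Letting $t \to \infty$, the left-hand side converges in distribution to the top-particle marginal $Y_n^*$ of the invariant measure (existence and uniqueness of which would be a preliminary step), while the right-hand side increases to $\sup_{t \geq 0} Z_n^{\dagger}(t)$. This is the queueing-theoretic Lindley/Burke mechanism familiar from the analogous Brownian identities in \cite{FW}.

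For the second equality, $\sup_{t \geq 0} Z_n^{\dagger}(t) \stackrel{d}{=} G(1,1)$, I would compute both sides explicitly and match. On the LPP side, $\mathbb{P}(G(1,1) \leq N)$ for the point-to-line model with geometric weights of parameter $1 - v_i v_{n-j+1}$ admits, after unfolding the triangular array via reflection across the antidiagonal and applying Lindström--Gessel--Viennot to non-intersecting up-right lattice paths, a closed form in terms of the symplectic Schur polynomial $\mathrm{sp}_{2n}(v_1, \ldots, v_n; v_1^{-1}, \ldots, v_n^{-1})$ evaluated at a suitable partition indexed by $N$ (as in the point-to-line identities of Bisi--Zygouras and the symplectic RSK of Sundaram). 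On the random walk side, the Doob $h$-transform in Section \ref{Harmonic_Functions} should produce a transition kernel whose restriction to $\{Z_n^{\dagger} \leq N\}$ is again a symplectic Schur kernel, so that $\mathbb{P}(\sup_{t \geq 0} Z_n^{\dagger}(t) \leq N)$ is obtained by summing a geometric series (or solving a resolvent equation at the absorbing wall $N$) that evaluates to the same symplectic Schur expression. Matching the two formulas gives the identity.

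A conceptually cleaner route, which I expect is the one the paper will actually follow in view of its abstract, is to bypass the independent computation of $\mathbb{P}(G(1,1) \leq N)$ by establishing the full multi-coordinate identity $(Y_1^*, \ldots, Y_n^*) \stackrel{d}{=} (G(1,1), \ldots, G(1,n))$ (or an analogous indexing), from which the statement of the theorem follows by reading off the top coordinate. This would proceed by constructing a Markovian dynamics on a symplectic Gelfand--Tsetlin pattern whose top-row projection is PushASEP with a wall and whose opposite-edge projection is the row-insertion growth process of point-to-line LPP; the stationary analysis of this pattern then yields both directions simultaneously.

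The main obstacle is the second equality. Unlike in the narrow-wedge setting, where Burke's theorem or column-insertion RSK gives an almost immediate identification with LPP, the presence of the wall forces one into the symplectic world, and the combinatorics of pushing together with blocking at zero has to be shown to match exactly the symplectic RSK growth on triangular arrays. Showing that the intertwining kernel built from symplectic Schur functions is preserved under passage to the stationary limit, and that it matches the LPP growth rule on the nose, is the delicate step; everything else should reduce to a careful bookkeeping of the coupled dynamics.
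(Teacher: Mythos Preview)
Your plan for the first equality has a real gap. Proposition \ref{two_sided_intertwining} does \emph{not} relate PushASEP with a wall to the ordered walk; it gives a last-passage representation for $Z_n^{\dagger}(t)$ alone, in terms of the walks $Z_i^{(v_{n-i+1})}$. The paper separately establishes a last-passage representation for $Y_n(t)$ (equation \eqref{semi_discrete_lpp}), but that one is in terms of walks with rates $v_1,\ldots,v_n$ in the \emph{opposite} order. The bridge between the two is a time-reversal argument on the underlying Poisson processes (equation \eqref{time_reversal}): reversing time in the formula for $Y_n(t)$ swaps the parameter ordering and converts the constraint $t_n=t$ into $u_n\le t$, which is exactly $\sup_{0\le s\le t}Z_n^{\dagger}(s)$. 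Without this step there is no reason the two max-type formulas should match, and an ``intertwining'' alone does not supply it.

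For the second equality your direction is correct---both sides are governed by symplectic Schur functions and you correctly point to Bisi--Zygouras---but the mechanism you sketch (restricting a kernel to $\{Z_n^{\dagger}\le N\}$ and solving a resolvent) is not what the paper does. The paper's computation is cleaner: write $P_x(\sup_t Z_n^{\dagger}<0)$ as the ratio $P_x(T_C=\infty)/P_x(T_A=\infty)$, where $T_A,T_C$ are the exit times of the free walk from the type-$A$ and type-$C$ Weyl chambers. Each numerator and denominator is identified as a harmonic function ($h_C$ involving $\mathrm{Sp}$, $h_A$ involving $S$) with the correct boundary behaviour, giving $P_0(\sup_t Z_n^{\dagger}\le\eta)=\prod_{i\le j}(1-v_iv_j)\prod_i v_i^{\eta}\,\mathrm{Sp}_{\eta^{(n)}}(v)$, which matches the Bisi--Zygouras formula for $P(G(1,1)\le\eta)$ on the nose. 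This is first done for $0<v_n<\ldots<v_1<1$ and then extended by a continuity argument (Lemma \ref{continuity_invariant_measure}). Your alternative route via the full vector identity is indeed pursued in the paper, but as a second, independent proof rather than as the primary one for this theorem.
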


The first identity in Theorem \ref{top_particle_theorem} follows from two representations for $Y_n^*$ 
and $\sup_{t \geq 0} Z_n^{\dagger}(t)$ as point-to-line last passage percolation times in a random
environment constructed from Poisson point processes. The equality in law then follows from
a time reversal argument. 

The main content of Theorem 
\ref{top_particle_theorem} is that either of these random variables is equal in distribution to a point-to-line last 
passage percolation time. 
This can be proven in two ways. The first method is to calculate the distribution function of 
$\sup_{t \geq 0} Z_n^{\dagger}(t)$ 
by relating the problem to conditioning a multi-dimensional random walk to stay in a Weyl chamber of type C given that 
it remains in a Weyl chamber of type A. This gives the distribution function of $\sup_{t \geq 0} Z_n^{\dagger}(t)$ as proportional to a symplectic Schur function divided by a Schur function. This can be identified as a known expression 
for the distribution function of point-to-line last passage percolation in a geometric environment from \cite{Bisi_Zygouras2}. This proof 
of Theorem \ref{top_particle_theorem} is given in Section \ref{Harmonic_Functions}.

The second method of proof is to view Theorem \ref{top_particle_theorem} as an equality of the marginal distributions of the largest co-ordinates in a multi-dimensional identity in law relating the whole invariant measure of PushASEP with a wall 
to a vector of point-to-line last passage percolation times. This leads to our main result. 
\begin{theorem}
\label{main_theorem}
Let $n \geq 1$ and suppose $0 < v_1, \ldots, v_n < 1$. 
Let $(Y_1^*, \ldots, Y_n^*)$ be distributed according to the invariant measure of PushASEP with a wall and let 
$(G(1, n), \ldots, G(1, 1))$ be a vector of point-to-line last passage percolation times defined in \eqref{lpp_defn}.
Then
\begin{equation*}
(Y_1^*, \ldots, Y_n^*)  \stackrel{d}{=} (G(1, n), \ldots, G(1, 1)).
\end{equation*}
\end{theorem}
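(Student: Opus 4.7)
The plan is to upgrade the top-coordinate equality of Theorem \ref{top_particle_theorem} to the full vector in two stages: first produce a joint representation of the invariant measure $(Y_1^*, \ldots, Y_n^*)$ as a vector of point-to-line last passage percolation times in a common continuous (Poisson) environment, and then match the joint law against the geometric environment using symplectic Schur function identities.

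For the first stage, the intertwining sketched around Proposition \ref{two_sided_intertwining} should allow each component $Y_k^*$ to be written as a functional of the whole ordered walk $(Z^{\dagger}_1, \ldots, Z^{\dagger}_n)$ rather than just its top coordinate. Applying the same time-reversal argument used for Theorem \ref{top_particle_theorem} to the enlarged system should then represent each $Y_k^*$ as a point-to-line last passage percolation time from a suitable vertex on the axis to the antidiagonal, all built from the \emph{same} family of independent Poisson point processes indexed by $\{(i,j) : i+j \leq n+1\}$. This delivers a jointly-defined Poisson-environment LPP vector whose law agrees with $(Y_1^*, \ldots, Y_n^*)$.

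For the second stage, the joint distribution of $(G(1,n), \ldots, G(1,1))$ in the geometric environment should be expressible as a symplectic Schur measure, via a point-to-line RSK / Fomin growth construction in the spirit of \cite{Bisi_Zygouras2}; the marginal of the top passage time is exactly the identity already invoked in the proof of Theorem \ref{top_particle_theorem}. On the Poisson side, an analogous symplectic Schur expression should come out of computing the harmonic function for the ordered walk $Z^{\dagger}$ conditioned in type C given type A, now read off at all the relevant endpoints instead of just the top one. Matching these two Schur-type joint distribution functions then closes the argument.

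The main obstacle is genuinely joint, not marginal: the Bisi--Zygouras formula used for Theorem \ref{top_particle_theorem} identifies a single LPP time, whereas here the whole triangular family of times $(G(1,n), \ldots, G(1,1))$ must be coupled to $(Y_1^*, \ldots, Y_n^*)$ simultaneously. The natural way to organise this is a multilevel dynamics on a symplectic Gelfand--Tsetlin pattern (in the spirit of Warren's process in a Weyl chamber, or O'Connell's constructions for non-colliding walks) whose two projections recover PushASEP with a wall on one side and a point-to-line RSK growth on the geometric environment on the other. Constructing this enlarged dynamics and checking that its stationary distribution has the required symplectic Schur structure compatibly on both sides is, I expect, where the technical work concentrates.
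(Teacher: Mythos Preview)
Your final paragraph --- construct an enlarged multilevel dynamics whose two projections are PushASEP with a wall and the LPP growth --- is in spirit the paper's second proof (Section~\ref{dynamic_reversibility}). But the concrete mechanisms you propose in Stages~1 and~2 do not hold up, and the enlarged object is not a symplectic Gelfand--Tsetlin pattern.

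\textbf{Stage 1 gap.} The semi-discrete representation \eqref{semi_discrete_lpp} and its time reversal \eqref{time_reversal} live in a Poisson environment with only $n$ lines $Z_1,\ldots,Z_n$, not a triangular family indexed by $\{(i,j):i+j\le n+1\}$. After time reversal, $Y_k^*$ becomes a supremum over paths through the lines $Z_{n-k+1},\ldots,Z_n$ with rates $v_1,\ldots,v_k$; this is not ``point-to-line LPP from $(1,k)$ to the antidiagonal'' in any array that also produces $G(1,1)$ for $k=n$. The geometric environment has $\binom{n+1}{2}$ independent weights with \emph{pairwise} parameters $1-v_iv_{n-j+1}$; there is no direct coupling of that to the $n$-line Poisson picture that makes all $k$ match simultaneously. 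The top-coordinate match in Theorem~\ref{top_particle_theorem} goes through a distribution-function identity, not an environment coupling, and that identity does not extend to the vector by the route you describe.

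\textbf{Stage 2 gap.} The joint law of $(G(1,n),\ldots,G(1,1))$ is not a symplectic Schur measure in the sense you invoke; what the paper computes is an explicit determinantal probability mass function (Proposition~\ref{Invariant_measure} and \eqref{lpp_prob_mass}) obtained by iterating the Dieker--Warren kernel (Lemma~\ref{lpp_transition_density}) together with the summation Lemma~\ref{ibp_lpp}. On the PushASEP side the invariant measure is found from an explicit transition-probability formula (Section~\ref{push_asep}), and the two determinants are then matched directly. The symplectic Schur function appears only for the one-dimensional distribution function of the top particle, not for the joint density.

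\textbf{What the paper's second proof actually does.} The enlarged process is defined on the triangular LPP array $\mathcal{X}=\{x_{ij}:i+j\le n+1\}$ itself, with rates \eqref{X_defn1}--\eqref{X_defn2} tuned so that Kelly's criterion (Lemma~\ref{Kelly}) verifies the product-geometric law of $(G(i,j):i+j\le n+1)$ as invariant measure; the row marginal is PushASEP with a wall. No Schur structure or RSK is needed --- the proof is a direct check of $\pi(x)q(x,x')=\pi(x')\hat q(x',x)$ and $q(x)=\hat q(x)$. If you want to pursue the multilevel idea, that is the state space and the verification method to aim for.
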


We give two proofs of Theorem \ref{main_theorem}. In the first proof, we prove in Section \ref{push_asep} a formula for the transition probability of PushASEP 
with a wall, following the method of \cite{borodin2008}. From this we obtain an expression for the
probability mass function of 
$(Y_1^*, \ldots, Y_n^*)$ in Proposition \ref{Invariant_measure}. 
In Section \ref{point_to_line}, we use an interpretation of last passage percolation as a 
discrete-time Markov chain, with a sequential update rule for particle positions, which has explicit 
determinantal transition probabilities given in \cite{dieker2008}. In order to find the distribution of a vector of \emph{point-to-line last passage percolation times}, we 
use the update rule of this discrete-time Markov chain while adding in a new particle at the origin after each time step. 
In such a way we can find an explicit probability mass function for $(G(1, n), \ldots, G(1, 1))$ which agrees with 
$(Y_1^*, \ldots, Y_n^*)$ and gives our first proof of Theorem \ref{main_theorem}. 

The second proof of Theorem \ref{main_theorem} is to obtain this multi-dimensional equality in law as a marginal equality of a larger identity in law. We give this proof in Section \ref{dynamic_reversibility}.
In particular, we construct a multi-dimensional Markov process involving pushing and blocking 
interaction which has (i) an invariant measure given by $\{G(i, j) : i + j \leq n+1\}$ and (ii) 
a certain marginal given by PushASEP with a wall.
Moreover, the process we construct is \emph{dynamically reversible}. This notion has appeared in the queueing literature \cite{kelly} and 
means that a process started in stationarity has the same distribution when run forwards and backwards in time 
\emph{up to a 
relabelling of the co-ordinates}. Dynamical reversibility leads to a convenient way of finding an invariant measure and can be used to deduce further properties of PushASEP with a wall. In particular, when started in stationarity the top particle of PushASEP with a wall evolves as a non-Markovian process with the same distribution when run forwards and backwards 
in time. This is a property shared by the $\text{Airy}_1$ process and it is natural to expect that the 
top particle in PushASEP with a wall
run in stationarity converges to the $\text{Airy}_1$ process.

We end the introduction by comparing with the results on PushASEP in Borodin and Ferrari \cite{borodin2008}.
When started from a step or periodic initial condition \cite{borodin2008}
prove that
the associated height function converges to the $\text{Airy}_2$ or $\text{Airy}_1$ process respectively 
(see also the seminal work \cite{bfps, sasamoto}). The choice of a periodic initial condition thus gives one way of accessing the KPZ universality class started from a flat interface. In this paper we 
instead impose a wall at the origin and consider the invariant measure of PushASEP with a wall. This makes a substantial difference to the analysis and unveils different connections within the KPZ universality class with flat initial data.

\section{Proof of Theorem \ref{top_particle_theorem}}
\label{Harmonic_Functions}

\subsection{The all-time supremum of a non-colliding process}
\label{suprema_non_colliding}

We start by defining Schur and symplectic Schur functions. It will be sufficient for our purposes to define them according to their Weyl character formulas and we only remark that they can also be defined as a sum over weighted 
Gelfand Tsetlin patterns and have a representation theoretic significance, see  \cite{Fulton_Harris}.
Let $W^n = \{(x_1, \ldots, x_n) \in \mathbb{Z}^n: x_1 \leq \ldots \leq x_n\}$, 
$W^n_{\geq 0} = \{(x_1, \ldots, x_n) \in \mathbb{Z}^n: 0 \leq x_1 \leq \ldots \leq x_n\}$
and 
$W^n_{\leq 0} = \{(x_1, \ldots, x_n) \in \mathbb{Z}^n: x_1 \leq \ldots \leq x_n \leq 0\}$.
For $x \in W^n$ we define the Schur function $S_x : \mathbb{R}^n \rightarrow \mathbb{R}$ by
\begin{equation}
\label{schur_defn}
S_{x}(v) = \frac{\text{det}(v_i^{x_j + j - 1})_{i, j = 1}^n}{\text{det}(v_i^{j-1})_{i, j = 1}^n}
\end{equation}
and for $x \in W^n_{\geq 0}$ we define the symplectic Schur function $\text{Sp}_x : \mathbb{R}^n_{> 0} \rightarrow \mathbb{R}$ 
by 
\begin{equation}
\label{symplectic_schur_defn}
\text{Sp}_{x}(v) = \frac{\text{det}\left(v_i^{x_j + j} - v_i^{-(x_j + j)}\right)_{i, j = 1}^n}{\text{det}(v_i^{j} - v_i^{-j})_{i, j = 1}^n}.
\end{equation}

Let $(Z_1^{(v_n)}(t), \ldots, Z_n^{(v_1)}(t))_{t \geq 0}$ denote a multi-dimensional continuous-time random walk 
started from $(x_1, \ldots, x_n)$ where each component is independent and $Z_i^{(v_{n-i+1})}$ jumps to the right at rate $v_{n-i+1}$ and to the left with 
rate $v_{n-i+1}^{-1}$. We define an ordered random walk
$(Z^{\dagger}_1(t), \ldots, Z^{\dagger}_n(t))_{t \geq 0}$ started from $x \in W^n$ as having a $Q$-matrix
given by a Doob $h$-transform: for $x \in W^n$ 
and $i = 1, \ldots, n$,
\begin{equation}
\label{non_colliding_defn}
Q_{Z^{\dagger}}(x, x \pm e_i) = \frac{S_{x \pm e_i}(v)}{S_{x}(v)} 1_{\{x \pm e_i \in W^n\}}.
\end{equation}
 This is a version of  $(Z_1^{(v_n)}(t), \ldots, Z_n^{(v_1)}(t))_{t \geq 0}$ with components conditioned to remain ordered as $Z_1 \leq \ldots \leq Z_n$. 
It is related to a non-colliding random walk with components conditioned to remain strictly ordered by a co-ordinate change; for more information on non-colliding random walks we refer to \cite{konig2005, konig2002, o_connell_2003}.

Define
$h_A : W^n \rightarrow \mathbb{R}$ by 
$h_A(x) = \prod_{i = 1}^n v_{n-i+1}^{-x_i} S_x(v)$
and define
$h_C : W^n_{\leq 0} \rightarrow \mathbb{R}$ by 
$h_C(x_1, \ldots, x_n) = \prod_{i = 1}^n v_{n-i+1}^{-x_i} \text{Sp}_{(-x_n, \ldots, -x_1)}(v)$.

\begin{proposition}
\label{two_sided_intertwining}
\begin{enumerate}[(i)]
\item $Q_{Z^\dagger}$ is a conservative $Q$-matrix. 
Equivalently, $h_A$  is harmonic for $(Z_1^{(v_n)}(t), \ldots, Z_n^{(v_1)}(t))_{t \geq 0}$ killed when it leaves
$W^n$. 
\item We have that
\begin{equation}
(Z_n^{\dagger}(t))_{t \geq 0}  \stackrel{d}{=} 
\left(\sup_{0 = t_0 \leq t_1 \leq \ldots \leq t_n = t} \sum_{i=1}^n (Z_i^{(v_{n-i+1})}(t_i) - Z_i^{(v_{n-i+1})}(t_{i-1}))\right)_{t \geq 0}.
\end{equation}
\end{enumerate}
\end{proposition}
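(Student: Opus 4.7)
For (i), the two formulations are equivalent by standard Doob $h$-transform bookkeeping: with $L$ the generator of the unconditioned walk (killed on exit from $W^n$), the geometric prefactors $\prod_i v_{n-i+1}^{-x_i}$ in $h_A$ convert the asymmetric rates $v_{n-i+1}^{\pm 1}$ into the Schur ratios $S_{x \pm e_i}(v)/S_x(v)$ appearing in (\ref{non_colliding_defn}), and $Q_{Z^\dagger}$ is conservative if and only if $(Lh_A)(x) = 0$ for every $x \in W^n$. Expanding and cancelling the geometric factors reduces this to
\begin{equation*}
\sum_{i=1}^n S_{x+e_i}(v) + \sum_{i=1}^n S_{x-e_i}(v) = \Big(\sum_{i=1}^n v_i + \sum_{i=1}^n v_i^{-1}\Big) S_x(v),
\end{equation*}
which is the Pieri rule for multiplication by $\sum_i v_i$ together with its dual for $\sum_i v_i^{-1}$; terms with $x \pm e_i \notin W^n$ contribute zero because two columns of the numerator determinant in (\ref{schur_defn}) then coincide.

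For (ii), my plan is to follow the Baryshnikov-O'Connell-Warren framework \cite{baryshnikov, biane2005, o_connell_2003, warren} and construct a push-and-block Markov process $(X_{i,k}(t))_{1 \leq i \leq k \leq n}$ on the discrete Gelfand-Tsetlin cone of interlacing patterns, driven by the $n$ independent walks so that the $k$-th diagonal coordinate $X_{k,k}(t)$ is exactly $Z_k^{(v_{n-k+1})}(t)$, while the remaining entries are updated by push-and-block interactions preserving interlacing. Pitman/Burke-type arguments, applied inductively in $k$, will show that the appropriate extremal coordinate of the $k$-th row equals
\begin{equation*}
\sup_{0 = t_0 \leq t_1 \leq \ldots \leq t_k = t} \sum_{i=1}^k \Big(Z_i^{(v_{n-i+1})}(t_i) - Z_i^{(v_{n-i+1})}(t_{i-1})\Big).
\end{equation*}
An intertwining argument then identifies the full $n$-th row, in order, as Markov with generator $Q_{Z^\dagger}$, so that its extremal coordinate realises both $Z_n^\dagger(t)$ and the sup above, giving equality as processes.

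The main obstacle is the intertwining identity $\Lambda_n Q_{Z^\dagger} = Q_{\text{triangle}} \Lambda_n$, where $\Lambda_n$ is the kernel producing a uniformly-random interlacing array above a given bottom row. This is a classical but delicate generator computation; the two-sided setting (both right- and left-jumps, with the reversed indexing $Z_i^{(v_{n-i+1})}$) is not directly covered by the one-sided Brownian or symmetric discrete settings in the cited literature, so a careful pairing of push rules (driven by right-jumps) with block rules (governing left-jumps) will be required.
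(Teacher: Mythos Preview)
Your strategy for (ii) is exactly the paper's: build a push-block process on Gelfand--Tsetlin patterns, identify the rightmost diagonal as PushASEP (hence the iterated supremum via the Skorokhod-type argument of \eqref{semi_discrete_lpp}), and identify the bottom row as the $h$-transformed walk via an intertwining. Your direct Pieri argument for (i) is a clean alternative to the paper, which instead obtains conservativity as a byproduct of the inductive intertwining.

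There is, however, a concrete error in your proposed intertwining kernel. You take $\Lambda_n$ to be the kernel producing a \emph{uniformly} random interlacing array above a given bottom row. This is only correct when all $v_i$ are equal. In the inhomogeneous case the kernel must be the Schur-weighted measure
\[
M_z(x) \;=\; \frac{w_v(x)}{S_z(v)}, \qquad w_v(x) = \prod_{k=1}^n v_{n-k+1}^{\,|x^k| - |x^{k-1}|},
\]
on patterns $x$ with bottom row $z$; equivalently, the two-level kernel is $\Lambda(y,(x',y')) = v_{n+1}^{|y'|-|x'|}\,\dfrac{S_{x'}(v)}{S_{y'}(v)}\,1_{\{y=y'\}}$. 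The geometric weight $w_v$ is precisely what compensates the asymmetry of the rates $v_{n-k+1}^{\pm 1}$ so that the Rogers--Pitman identity $Q_Y\Lambda = \Lambda\,\mathcal{A}$ balances. If you attempt the generator computation with the uniform kernel, the terms coming from pushes and blocks on adjacent levels will not cancel against the off-diagonal entries of $Q_{Z^\dagger}$, because the ratios $S_{x\pm e_i}(v)/S_x(v)$ carry the $v$-dependence that the uniform kernel discards. Once you replace your $\Lambda_n$ by $M_z$, the verification goes through as in Section~\ref{Intertwining}.
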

This is a consequence of Theorem 5.10 in \cite{biane2005} and is proved by multidimensional versions of 
Pitman's transformation. It is also closely related to the analysis in \cite{borodin2008}. In the case that only rightward jumps in $Z_i$ are present, this corresponds to a construction of a process on a Gelfand-Tsetlin patten with pushing and blocking interactions \cite{warren_windridge}. 
The statement above can also be proved as a consequence of push-block dynamics by minor modifications of the proof of Theorem 2.1 in \cite{warren_windridge}
and we describe these modifications in Section \ref{Intertwining}. The construction of a corresponding process
on a symplectic Gelfand Tsetlin pattern in \cite{warren_windridge} leads to the following. 
\begin{lemma}[Theorem 2.3 of \cite{warren_windridge}]
\label{h_c_harmonic}
$h_C$ is harmonic for 
$(Z^{(v_n)}(t), \ldots, Z^{(v_1)})$ killed when it
leaves $W^n_{\leq 0}$.
\end{lemma}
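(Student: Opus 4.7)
The result is stated as Theorem 2.3 of \cite{warren_windridge}, so the most direct approach is simply to invoke that theorem: in their push-block construction on a symplectic Gelfand--Tsetlin pattern, the bottom row evolves as the Doob $h$-transform of $(Z^{(v_n)},\ldots,Z^{(v_1)})$ killed on leaving $W^n_{\leq 0}$, with harmonic function $h_C$, from which $h_C$-harmonicity is immediate.

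If one wishes to verify harmonicity directly, the plan is to reduce to a Pieri-type identity for symplectic Schur functions. First, substitute $y_j = -x_{n-j+1}$, which bijects $W^n_{\leq 0}$ onto $W^n_{\geq 0}$ and rewrites $h_C(x) = \prod_{j=1}^n v_j^{y_j} \text{Sp}_y(v)$. Under this relabelling the rightward jump $x_i \to x_i + 1$ at rate $v_{n-i+1}$ becomes the jump $y_j \to y_j - 1$ at rate $v_j$ (with $j = n-i+1$), so the walk in $y$-coordinates has $y_j$ moving left at rate $v_j$ and right at rate $v_j^{-1}$. After cancelling $\prod_j v_j^{y_j}$ from both sides, the harmonic equation reduces to
\begin{equation*}
\sum_{j=1}^n \left[\text{Sp}_{y+e_j}(v)\, 1_{\{y+e_j \in W^n_{\geq 0}\}} + \text{Sp}_{y-e_j}(v)\, 1_{\{y-e_j \in W^n_{\geq 0}\}}\right] = \left(\sum_{j=1}^n (v_j+v_j^{-1})\right)\text{Sp}_y(v),
\end{equation*}
which is the Pieri rule for multiplication by the character $\sum_j(v_j+v_j^{-1})$ of the defining $Sp_{2n}$-representation.

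The Pieri rule itself follows from the entry-wise identity $(v+v^{-1})(v^m-v^{-m}) = (v^{m+1}-v^{-(m+1)}) + (v^{m-1}-v^{-(m-1)})$ together with multilinearity of the alternant in \eqref{symplectic_schur_defn}. The transitions forbidden by $y \pm e_j \notin W^n_{\geq 0}$ drop out automatically: either the $j$-th and $(j \pm 1)$-th columns of the resulting determinant coincide (when $y_j$ meets $y_{j\pm 1}$), or the column $v_i^0 - v_i^0 = 0$ appears (corresponding to the wall at $y_1 = 0$, i.e.\ $x_n = 0$). The main obstacle is the careful sign and boundary bookkeeping for all such terms, which amounts to the $Sp_{2n}$ branching rule $V \otimes L(y) = \bigoplus_\mu L(\mu)$ over dominant $\mu \in \{y \pm e_j\}$; this is essentially the content used in the proof of Theorem 2.3 of \cite{warren_windridge}.
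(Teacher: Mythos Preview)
Your proposal is correct and matches the paper's approach: the paper simply cites Theorem 2.3 of \cite{warren_windridge} and remarks that the present statement is obtained from it by a reflection through the origin, which is exactly your substitution $y_j = -x_{n-j+1}$ mapping $W^n_{\leq 0}$ to $W^n_{\geq 0}$. Your additional direct verification via the Pieri identity for $\text{Sp}$ is a valid self-contained alternative that the paper does not spell out; the only point to tighten is that ``multilinearity of the alternant'' alone does not give $\sum_j \det(A^{(j)}) = (\sum_i (v_i+v_i^{-1}))\det A$ when $A^{(j)}$ has its $j$-th column scaled rowwise by $v_i+v_i^{-1}$---one needs the elementary Leibniz-formula identity $\sum_j \det(A^{(j)}) = (\sum_i d_i)\det A$, after which your boundary-term analysis (equal columns when $y_j = y_{j\pm 1}$, and $\phi_0 \equiv 0$ when $y_1=0$) goes through cleanly.
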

This is a reflection through the origin of the result in \cite{warren_windridge} which considers
a process killed when it
leaves $W^n_{\geq 0}$.


\begin{proposition}[Corollary 7.7 of \cite{lecouvey}]
Suppose $0 < v_n < \ldots < v_1 < 1$.
\begin{enumerate}[(i)]
\item Let $T_A = \inf \{t \geq 0: (Z^{(v_n)}(t), \ldots, Z^{(v_1)}(t)) \notin W^n\}$.
Then for $x \in W^n$, we have $P_x(T_A = \infty) = \kappa_A h_A(x)$ where 
\begin{equation*}
\kappa_A = \prod_{i < j} (v_i - v_j) \prod_{j=1}^{n-1} v_{j}^{-(n-j)}.
\end{equation*}.
\item Let $T_C = \inf \{t \geq 0: (Z^{(v_n)}(t), \ldots, Z^{(v_1)}(t)) \notin W^n_{\leq 0}\}$. Then for $x \in W^n_{\leq 0}$, 
we have $P_x(T_C = \infty) = \kappa_C h_C(x)$
where 
\begin{equation*}
\kappa_C = \prod_{1 \leq i \leq j \leq n} (1 - v_i v_j ) \prod_{i < j} (v_i - v_j) \prod_{j=1}^{n-1} v_j^{-(n-j)} 
\end{equation*}
\end{enumerate}
\end{proposition}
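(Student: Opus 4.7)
The plan is to identify $P_x(T_A = \infty) = \kappa_A h_A(x)$ (and analogously $P_x(T_C = \infty) = \kappa_C h_C(x)$) via a martingale argument, then pin down the constants by evaluating the harmonic function deep inside the chamber where the walk survives with probability tending to one.

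By Proposition \ref{two_sided_intertwining}(i) and Lemma \ref{h_c_harmonic}, $h_A$ and $h_C$ are strictly positive and harmonic on $W^n$ and $W^n_{\leq 0}$ respectively for the corresponding killed walk, and vanish on the chamber boundaries. Hence $M^{(*)}_t := h_*(Z(t \wedge T_*))$ is a nonnegative martingale for each $* \in \{A, C\}$, converging almost surely, with $M^{(*)}_t = 0$ once $T_* \leq t$. The hypothesis $0 < v_n < \ldots < v_1 < 1$ makes each component's drift $v_{n-i+1} - v_{n-i+1}^{-1}$ strictly negative and strictly increasing in $i$ (as $v \mapsto v - v^{-1}$ is increasing). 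Therefore, as $x$ is taken deep inside $W^n$ (respectively $W^n_{\leq 0}$) with all gaps $x_{i+1} - x_i$ diverging, $P_x(T_* = \infty) \to 1$; and on $\{T_* = \infty\}$ the gaps diverge almost surely, so $h_*(Z(t))$ converges to the same asymptotic constant.

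To identify that asymptotic constant, expand the Weyl character formulas \eqref{schur_defn} and \eqref{symplectic_schur_defn} as sums over permutations: by the rearrangement inequality, a single permutation dominates in the limit --- the longest permutation $\sigma(i) = n-i+1$ for $h_A$, and the identity permutation for $h_C$ (after pulling out the divergent factors $v_i^{-(y_j+j)}$ from the symplectic determinant, where $y_i = -x_{n-i+1}$). A short calculation gives $\lim h_A(x) = \prod_i v_i^{n-i}/\prod_{i<j}(v_i - v_j) = \kappa_A^{-1}$, and after factorising the type $C$ Weyl denominator one obtains $\lim h_C(x) = \kappa_C^{-1}$ with the stated constant. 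A dominated convergence argument (the negative drift ensures $v_{n-i+1}^{-\sup_t Z_i(t)}$ is integrable, since $\sup_t Z_i(t)$ has geometric tails) then passes to the limit in $h_*(x) = E_x[M^{(*)}_t]$ to yield $h_*(x) = \kappa_*^{-1} P_x(T_* = \infty)$.

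The main obstacle is the explicit factorisation $\det(v_i^j - v_i^{-j}) = (-1)^n \prod_k v_k^{-n} \prod_{i \leq j}(1-v_iv_j) \prod_{i<j}(v_i - v_j)$ needed to match $\lim h_C(x)$ with the stated form of $\kappa_C$; this is essentially a classical character identity but must be carried out carefully. An attractive alternative, probably closer to Lecouvey's approach, is to derive both constants directly from Cauchy--Littlewood type identities for Schur and symplectic Schur functions, which produces the factor $\prod_{i \leq j}(1-v_iv_j)$ transparently and avoids ad hoc Vandermonde manipulations.
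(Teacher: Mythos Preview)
Your proposal is essentially the paper's own proof: both invoke Proposition~\ref{two_sided_intertwining}(i) and Lemma~\ref{h_c_harmonic} for harmonicity, check vanishing on the chamber boundary, extract the asymptotic constant from the dominant term in the Weyl formula (the paper cites the Fulton--Harris identity \eqref{fulton_harris_eqn} for the type~$C$ denominator, which is exactly the factorisation you flag as the main obstacle), and conclude by martingale convergence. The one divergence is that the paper observes the martingale $h_*(Z_t^*)$ is \emph{bounded} and hence converges in $L^1$, whereas your stated dominating function $\prod_i v_{n-i+1}^{-\sup_t Z_i(t)}$ does not directly control $h_A(Z_t)$ (the Schur factor $S_x(v)$ is itself unbounded on $W^n$); boundedness of $h_A$ on $W^n$---each term in the permutation expansion is bounded there, since $\prod_{j\ge k} v_{\sigma(j)} \le \prod_{j\ge k} v_{n-j+1}$ for every $k$---repairs this immediately.
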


The probability that a random walk remains within a Weyl chamber for all time is considered in a general setting in  \cite{lecouvey}. In our setting, we give a direct proof using Proposition \ref{two_sided_intertwining} and Lemma \ref{h_c_harmonic}.

\begin{proof}
Proposition \ref{two_sided_intertwining} and Lemma \ref{h_c_harmonic} show that $h_A$ and $h_C$ are harmonic functions for 
$(Z^{(v_n)}(t), \ldots, Z^{(v_1)})$ killed when it leaves $W^n$ and
$W^n_{\leq 0}$ respectively. 

We now check that $\kappa_A h_A$ and $\kappa_C h_C$ have the correct boundary behaviour. 
Let $\lvert x \rvert = \sum_{i=1}^d x_i$ for $x \in \mathbb{R}^d$ and define $\partial W^n = \{x \notin W^n : \exists x' \in W^n \text{ with } \lvert x - x' \rvert = 1\}$ and $\partial W^n_{\leq 0} = \{x \notin W^n_{\leq 0} : \exists x' \in W^n_{\leq 0} \text{ with } \lvert x - x' \rvert = 1\}$. Then we can observe from \eqref{schur_defn} that $S_x(v) = 0$ for all $x \in \partial W^n$ because two columns in the determinant in the numerator of \eqref{schur_defn} coincide if $x_i = x_{i+1} + 1$ for some $i = 1, \ldots, n-1$. In a similar manner, $h_C(x_1, \ldots, x_n) = \prod_{i=1}^n v_{n-i+1}^{-x_i} \text{Sp}_{(-x_n, \ldots, -x_1)}(v) = 0$ for all $x \in \partial W^n_{\leq 0}$ due to the above observation 
and that $h_C(x) = 0$ when $x_n = 1$. 

We now consider the behaviour at infinity. For $h_A$, it is easy to see from the Weyl character formula \eqref{schur_defn} that
\begin{equation*}
\kappa_A \lim_{x_i - x_{i+1} \rightarrow -\infty} \prod_{i=1}^n v_{n-i+1}^{-x_i} S_x(v) = \kappa_A \frac{\prod_{j=1}^{n-1} v_{j}^{n-j}}{\prod_{i < j} (v_i - v_j)}  = 1
\end{equation*}
where we use the limit above to mean $x_1, \ldots, x_n \rightarrow -\infty$ and $x_i - x_{i+1} \rightarrow -\infty$ for 
each $i = 1, \ldots, n-1$. 
For the symplectic Schur function we find \begin{equation*}
\lim_{x_{i} - x_{i+1} \rightarrow -\infty} \prod_{i=1}^n v_{n-i+1}^{-x_i} \text{Sp}_{(-x_n, \ldots, -x_1)}(v) = 
\frac{(-1)^{n} \prod_{j = 1}^n v_j^{-j}}{\text{det}(v_i^j - v_i^{-j})_{i, j = 1}^n}
\end{equation*}
and use Eq. 24.17 from \cite{Fulton_Harris} to give a more explicit expression for the limiting constant
\begin{equation}
\label{fulton_harris_eqn}
\text{det}(v_i^j - v_i^{-j})_{i, j = 1}^n = (-1)^{n} \prod_{i < j} (v_i - v_j) 
\prod_{1 \leq i \leq j \leq n}(1 - v_i v_j ) \prod_{j=1}^n v_j^{-n}.
\end{equation}
We conclude that 
 \begin{equation*}
\lim_{x_{i} - x_{i+1} \rightarrow -\infty} \kappa_C \prod_{i=1}^n v_{n-i+1}^{-x_i} \text{Sp}_{(-x_n, \ldots, -x_1)}(v) = 1.
\end{equation*}

In the case $0 < v_n < \ldots < v_1 < 1$ the process $(Z^{(v_n)}(t), \ldots, Z^{(v_1)}(t))$ almost surely has $Z_i^{(v_i)} \rightarrow -\infty$ for each $i = 1, \ldots, n$ and $Z_i^{(v_i)} - Z_{i+1}^{(v_{i+1})} \rightarrow -\infty$ for $i=1, \ldots, n-1$.
Therefore the above specifies the boundary behaviour of $\kappa_A h_A$ and $\kappa_C h_C$.

Suppose that $(h, T)$ either equals $(\kappa_A h_A, T_A)$ or $(\kappa_C h_C, T_C)$ and
let $Z_t^*$ denote $(Z^{(v_n)}(t), \ldots, Z^{(v_1)}(t))$ killed at the instant it leaves $W^n$ or $W^n_{\leq 0}$.
Then $(h(Z_t^*))_{t \geq 0}$ is a bounded martingale and
converges almost surely and in $L^1$ to a random variable $\mathcal{Y}$. From the boundary behaviour specified above, $\mathcal{Y}$ equals $1$ if $T = \infty$ 
and equals zero otherwise almost surely. Using this in the $L^1$ convergence shows that $h(x) = \lim_{t \rightarrow \infty} E_x(h(Z_t^*)) = P_x(T = \infty).$
\end{proof}

From this we can prove the second equality in law in Theorem \ref{top_particle_theorem} for a particular choice of rates. 
Suppose $0 < v_n < \ldots < v_1 < 1$ which ensures that all of the following events have strictly positive probabilities, 
and let $x \in W^n_{\leq 0}$. Then
\begin{IEEEeqnarray*}{rCl}
P_{(x_1, \ldots, x_n)}\left(\sup_{t \geq 0} Z_n^{\dagger} < 0\right) 
& = &
\frac{P_{(x_1, \ldots, x_n)}\left(T_C = \infty \right)}{P_{(x_1, \ldots, x_n)}(T_A = \infty)} \\
& = &  \frac{\kappa_C h_C(x)}{\kappa_A h_A(x)} \\
& = &  \prod_{1 \leq i \leq j \leq n} (1 - v_i v_j) \frac{ \text{Sp}_{(-x_n, \ldots, -x_1)}(v)}{\text{S}_x(v)}.
\end{IEEEeqnarray*} 
Let $(x_1, \ldots, x_n) \rightarrow (-\eta, \ldots, -\eta)$ and shift co-ordinates by $\eta$. Then  
\begin{equation}
\label{largest_particle_non_colliding_sup}
P_{0}\left(\sup_{t \geq 0} Z_n^{\dagger} \leq \eta \right) =  
P_{-\eta}\left(\sup_{t \geq 0} Z_n^{\dagger} \leq 0 \right)
 =  \prod_{1 \leq i \leq j \leq n} (1 - v_i v_j) \prod_{i = 1}^n v_{i}^{\eta}\text{Sp}_{\eta^{(n)}}(v)
\end{equation}
by using that $S_{(x_1, \ldots, x_n)}(v) \rightarrow \prod_{i=1}^n v_i^{-\eta}$ and the notation $\eta^{(n)} = (\eta, \ldots, \eta)$. 
We compare this to Corollary 4.2 of \cite{Bisi_Zygouras2} which in our notation states that
\begin{equation}
\label{lpp_formula}
P(G(1, 1) \leq \eta) =  \prod_{1 \leq i \leq j \leq n} (1 - v_i v_j) \prod_{i = 1}^n v_i^{\eta} 
\text{Sp}_{\eta^{(n)}}(v).
\end{equation}
Equation \eqref{largest_particle_non_colliding_sup} and \eqref{lpp_formula} prove the second equality in law in Theorem \ref{top_particle_theorem} for $0 < v_n < \ldots < v_1 < 1$. 
This can be extended to all distinct rates with $v_i < 1$ for each $i = 1, \ldots, n$ by observing that the law of the process $(Z_n^{\dagger}(t))_{t \geq 0}$ 
is invariant under permutations of the $v_i$. In particular, this holds for $\sup_{t \geq 0} Z_n^{\dagger}(t)$ and 
also holds for $G(1, 1)$ from \eqref{lpp_formula}.

\subsection{Time reversal}

We now prove that PushASEP with a wall started from $(0, \ldots, 0)$ has an interpretation as semi-discrete last passage percolation times in a 
environment constructed from $2n$ Poisson point processes.
In particular, 
\begin{equation}
\label{semi_discrete_lpp}
(Y_k(t))_{k=1}^n = \left(\sup_{0 \leq t_0 \leq t_1 \leq \ldots \leq t_k = t} \sum_{i=1}^k (Z_i^{(v_i)}(t_i) - Z_i^{(v_i)}(t_{i-1}))\right)_{k=1}^n
\end{equation}
where the $Z_i^{(v_i)}$ are a difference of two Poisson point processes.
In the proof of \eqref{semi_discrete_lpp} we will denote 
the right hand side of \eqref{semi_discrete_lpp} by $(U_k(t))_{k = 1}^n$. We check that the evolution of this process is PushASEP with a wall. When $n = 1$, $U_1(t) = \sup_{0 \leq t_0 \leq t} (Z_1^{(v_1)}(t) - Z_1^{(v_1)}(t_0))$ 
and this evolves as PushASEP with a wall with one particle started from zero.  
For the inductive step we note that adding in the $n$-th particle to $(U_k(t))_{k = 1}^n$ does not affect the 
evolution of the first $(n-1)$ particles. Therefore we only need to consider the $n$-th particle given by
\begin{equation}
\label{poisson_skorokhod}
U_n(t) = \sup_{0 \leq s \leq t} (Z_n^{(v_n)}(t) - Z_n^{(v_n)}(s) + Y_{n-1}(s))  
\end{equation}
where $Y_{n-1}$ is the $(n-1)$-th particle in PushASEP with a wall. 
If $U_{n} > Y_{n-1}$ then the suprema in \eqref{poisson_skorokhod} is attained with a choice $s < t$ 
and $U_n$ jumps right or left whenever $Z_n^{(v_n)}$ does. If $U_n = Y_{n-1}$ then at least one of the 
(possibly non-unique) maximisers of the supremum in \eqref{poisson_skorokhod} involves $s = t$. 
This means that if $Z_n^{(v_n)}$  jumps to the right then $U_n$ jumps to the right; 
if $Y_{n-1}$ jumps to the right then $U_n$ jumps to the right (this is is the \emph{pushing} interaction); and if 
$Z_n^{(v_n)}$ jumps to the left then $U_n$ is unchanged (this is the \emph{blocking} interaction).
Therefore $U_n$ defined by \eqref{poisson_skorokhod} follows the dynamics of the $n$-th particle in PushASEP
with a wall started from the origin.
Therefore \eqref{semi_discrete_lpp} follows inductively.

%


Equation \ref{semi_discrete_lpp} has a similar form to Proposition \ref{two_sided_intertwining} and 
this along with time reversal establishes the following connection, see \cite{five_author, FW} for a similar 
argument in a 
Brownian context.
\begin{proposition}
\label{push_asep_non_colliding}
Let $Y_n^*$ be distributed as the top particle in PushASEP with a wall in its invariant measure and 
$Z_n^{\dagger}$ be the top particle in the ordered random walk with $Q$-matrix given by \eqref{non_colliding_defn} and started from the origin. Then
\begin{equation*}
Y_n^* \stackrel{d}{=} \sup_{t \geq 0} Z_n^{\dagger}(t).
\end{equation*}
\end{proposition}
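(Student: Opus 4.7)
The plan is to derive the equality by combining the Poisson semi-discrete last-passage formula \eqref{semi_discrete_lpp} for the PushASEP trajectory with the process-level identity in Proposition \ref{two_sided_intertwining}(ii), bridged by a time reversal of the driving Lévy processes. Starting from
\[
Y_n(t) = \sup_{0 \leq t_0 \leq t_1 \leq \ldots \leq t_n = t} \sum_{i=1}^n \bigl(Z_i^{(v_i)}(t_i) - Z_i^{(v_i)}(t_{i-1})\bigr),
\]
I would perform the substitution $u_i = t - t_{n-i}$ for $i=0,\ldots,n$, which converts the constraint set into $0 = u_0 \leq u_1 \leq \ldots \leq u_n \leq t$ (the freedom $t_0 \geq 0$ becoming $u_n \leq t$ rather than $u_n = t$), and reindex using $j = n-i+1$ to obtain
\[
Y_n(t) = \sup_{0 = u_0 \leq u_1 \leq \ldots \leq u_n \leq t} \sum_{j=1}^n \bigl(\hat{Z}_j(u_j) - \hat{Z}_j(u_{j-1})\bigr),
\]
where $\hat{Z}_j(s) := Z_{n-j+1}^{(v_{n-j+1})}(t) - Z_{n-j+1}^{(v_{n-j+1})}(t-s)$ are the time-reversed driving walks on $[0,t]$.

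Since each $Z_i^{(v_i)}$ is a Lévy process (a difference of independent Poissons of rates $v_i$ and $v_i^{-1}$), its time-reversal over $[0,t]$ has the same law as itself, and independence across $i$ is preserved. Consequently the vector $(\hat{Z}_1,\ldots,\hat{Z}_n)$ on $[0,t]$ has the same joint distribution as the independent walks $(Z_1^{(v_n)},\ldots,Z_n^{(v_1)})$ on $[0,t]$ used in Proposition \ref{two_sided_intertwining}. Splitting the supremum over $u_n$ from the remaining constraints then yields $Y_n(t) \stackrel{d}{=} \sup_{0 \leq s \leq t} \Psi(s)$, where
\[
\Psi(s) := \sup_{0 = u_0 \leq u_1 \leq \ldots \leq u_{n-1} \leq u_n = s} \sum_{j=1}^n \bigl(Z_j^{(v_{n-j+1})}(u_j) - Z_j^{(v_{n-j+1})}(u_{j-1})\bigr).
\]
Crucially, Proposition \ref{two_sided_intertwining}(ii) is an identity of \emph{processes}, not merely of one-point marginals, so $(\Psi(s))_{s\geq 0} \stackrel{d}{=} (Z_n^{\dagger}(s))_{s\geq 0}$, and applying a supremum (a continuous functional) to both sides gives $Y_n(t) \stackrel{d}{=} \sup_{0 \leq s \leq t} Z_n^{\dagger}(s)$.

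It remains to let $t \to \infty$. On the left, stability of PushASEP with a wall when $v_i < 1$ gives $Y_n(t) \to Y_n^*$ in distribution. On the right, each driving walk has strictly negative drift $v_i - v_i^{-1} < 0$, which forces $Z_n^{\dagger}(t) \to -\infty$ almost surely, so $\sup_{s \geq 0} Z_n^{\dagger}(s)$ is a.s.\ finite and the monotone increase $\sup_{0 \leq s \leq t} Z_n^{\dagger}(s) \uparrow \sup_{s \geq 0} Z_n^{\dagger}(s)$ passes to convergence in distribution. The main subtlety I anticipate is the careful bookkeeping of indices after time reversal, so that the reversed walks $\hat{Z}_j$ match exactly the rate-$v_{n-j+1}$ driving walks required by Proposition \ref{two_sided_intertwining}(ii); once the reindexing $j = n - i + 1$ is done, the two formulas align and the supremum can be commuted through the equality in distribution cleanly.
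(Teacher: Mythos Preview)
Your proposal is correct and follows essentially the same route as the paper: the substitution $u_i = t - t_{n-i}$, the time reversal of the driving compound-Poisson walks, the reindexing to match the rate assignment in Proposition~\ref{two_sided_intertwining}(ii), and the passage $t\to\infty$ are exactly the steps the paper carries out in equation~\eqref{time_reversal} and the sentence following it. You include a little more justification for the $t\to\infty$ limit than the paper does, which is fine; the only minor caveat is that your assertion ``$Z_n^\dagger(t)\to-\infty$ a.s.'' is not needed (and not entirely obvious from the $h$-transform alone), since monotonicity of $\sup_{0\le s\le t}Z_n^\dagger(s)$ together with the already-established fixed-$t$ identity and the tightness of $Y_n(t)$ suffices.
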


\begin{proof}
For any fixed $t$, we let $t - u_i = t_{k-i}$ and use time reversal of continuous-time random walks 
$(Z_i^{(v_i)}(t) - Z_i^{(v_i)}(t - s))_{s \geq 0} \stackrel{d}{=} (Z_{n-i+1}^{(v_i)}(s))_{s \geq 0}$ to establish that 
\begin{IEEEeqnarray}{rCl}
\label{time_reversal}
(Y_k(t))_{k=1}^n & = &  \left(\sup_{0 \leq t_0 \leq \ldots \leq t_k = t} \sum_{i=1}^k (Z_i^{(v_i)}(t_i) - Z_i^{(v_i)}(t_{i-1}))\right)_{k=1}^n \IEEEnonumber\\
& = &  \left(\sup_{0 = u_0 \leq \ldots \leq u_k \leq t} \sum_{i=1}^k (Z_i^{(v_i)}(t- u_{k-i}) - Z_i^{(v_i)}(t - u_{k-i+1})) 
\right)_{k=1}^n \IEEEnonumber\\
& \stackrel{d}{=} & \left(\sup_{0 = u_0 \leq \ldots \leq u_k \leq t} \sum_{i=1}^k (Z_{n-i+1}^{(v_i)}(u_{k-i+1}) - Z_{n-i+1}^{(v_i)}(u_{k-i})) \right)_{k=1}^n
\end{IEEEeqnarray}
The equality in law of the largest co-ordinates, relabelling the sum from $i$ to $n-i+1$ and comparing with Proposition \ref{two_sided_intertwining} part (ii) shows that $Y_n(t) \stackrel{d}{=} \sup_{0 \leq s \leq t} Z_n^{\dagger}(s)$. In particular, letting $t \rightarrow \infty$ completes the proof.
\end{proof}

\begin{lemma}
\label{continuity_invariant_measure}
The distribution of $(Y_1^*, \ldots, Y_n^*)$ is continuous in $(v_1, \ldots, v_n)$ on the set $(0, 1)^n$.
\end{lemma}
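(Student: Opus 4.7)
The plan is to realise $(Y_1^*, \ldots, Y_n^*)$, for every $v \in (0,1)^n$, as an almost-sure limit of a common functional of Poisson random measures coupled across all parameter values, and then to argue that this limit is almost-surely continuous in $v$. Concretely, let $\mathcal{N}_j^{+}, \mathcal{N}_j^{-}$ for $j = 1, \ldots, n$ be independent Poisson random measures on $[0,\infty) \times [0,\infty)$ with Lebesgue intensity, and for each $v$ set
\begin{equation*}
Z_j^{(v_{n-j+1})}(t) := \mathcal{N}_j^+\bigl([0,t] \times [0, v_{n-j+1}]\bigr) - \mathcal{N}_j^-\bigl([0,t] \times [0, v_{n-j+1}^{-1}]\bigr),
\end{equation*}
so that the $n$ walks $Z_j^{(v_{n-j+1})}$ have the joint law required by \eqref{time_reversal}. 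Under this coupling, for any $T > 0$ and $v^{(m)} \to v$ in $(0,1)^n$, almost surely only finitely many Poisson atoms lie in $[0,T] \times [0, V]$ for $V$ large, and a.s.\ none has second coordinate exactly $v_{n-j+1}$ or $v_{n-j+1}^{-1}$, so the paths at parameter $v^{(m)}$ eventually coincide with those at $v$ on $[0, T]$.

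Next, I would let $\tilde{Y}_k(t;v)$ denote the $k$-th coordinate on the right-hand side of \eqref{time_reversal}. Since each walk has strictly negative drift $v_{n-j+1} - v_{n-j+1}^{-1}$ on $(0,1)^n$ and thus drifts to $-\infty$, a short induction on $k$ produces a random $T^*(v) < \infty$ such that for all $t \geq T^*(v)$ the supremum defining $\tilde{Y}_k(t;v)$ is attained on $[0, T^*(v)]^k$, so that $\tilde{Y}_k(t; v)$ stabilises almost surely to a finite limit $\tilde{Y}_k^*(v)$. Combining this with \eqref{time_reversal} and the weak convergence of $(Y_k(t))_{k=1}^n$ to the invariant measure as $t \to \infty$ yields $(Y_1^*, \ldots, Y_n^*) \stackrel{d}{=} (\tilde{Y}_1^*(v), \ldots, \tilde{Y}_n^*(v))$ for every $v$, jointly realised on the same probability space.

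To conclude, I would fix a compact $K \subset (0,1)^n$ and $v^{(m)} \to v$ in $K$. A standard exponential-martingale tail estimate, with Cramér parameter chosen uniformly on $K$, produces constants $C, c, \delta > 0$ with $P(\sup_{s \geq T} Z_j^{(v_{n-j+1})}(s) \geq -\delta T) \leq C e^{-cT}$ uniformly in $v \in K$; Borel--Cantelli then provides a single random $T^* < \infty$ for which the supremum in $\tilde{Y}_k^*(v)$ is attained on $[0, T^*]^k$ simultaneously for every $v \in K$, almost surely. On $[0, T^*]$ the path-coupling of the first paragraph gives eventual equality of the coupled walks, so $\tilde{Y}_k^*(v^{(m)}) = \tilde{Y}_k^*(v)$ for all $m$ sufficiently large, which is a.s.\ convergence of integer-valued quantities and hence the required weak continuity. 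The main obstacle is the uniform-in-$v$ tail estimate on the nested suprema in \eqref{time_reversal}: while each individual walk has the required negative drift, transferring this to uniform tightness of the random localisation time $T^*$ over compact subsets of $(0,1)^n$ demands the careful inductive argument sketched above.
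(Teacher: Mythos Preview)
Your proposal is correct and follows essentially the same approach as the paper: both couple the walks across all parameter values via Poisson random measures (the paper uses an equivalent marked-Poisson-process construction on $[0,\infty)\times[0,1]$, restricted to a box $(\epsilon,1-\epsilon)^n$), invoke the representation coming from \eqref{time_reversal}, argue that the infinite-horizon suprema stabilise at a finite random time uniformly over a compact parameter set, and then conclude by observing that on any finite time window the coupled paths at nearby parameters eventually agree exactly. You are somewhat more explicit than the paper about the mechanism behind the uniform stabilisation (the paper asserts it in a single sentence), and your acknowledgement that this uniformity is the main technical point is apt; the monotonicity of the coupled walks in $v$ can be used to reduce the uniform statement over $K$ to a single dominating parameter, which closes the sketch cleanly.
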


\begin{proof}
We will use the representation for $(Y_1^*, \ldots, Y_n^*)$ obtained by relabelling the sum $i$ to $k-i+1$ in \eqref{time_reversal} 
 and letting
$t \rightarrow \infty$,
\begin{equation*}
 (Y_k^*)_{k=1}^n \stackrel{d}{=} \left(\sup_{0 = t_0 \leq \ldots \leq t_k < \infty} \sum_{i=1}^k (Z_{n-k+i}^{(v_{k-i+1})}(t_{k-i+1}) - Z_{n-k+i}^{(v_{k-i+1})}(t_{k-i})) \right)_{k=1}^n.
\end{equation*}
We fix $\epsilon > 0$ and construct realisations of $Z_i^{(v_{n-i+1})}$ for all $\epsilon < v_i < 1 - \epsilon$ on the same probability space. 
To achieve this we define $2n$ independent marked Poisson point process $R_1, \ldots, R_n$ and $L_1, \ldots, L_n$ on
$\mathbb{R}_{\geq 0} \times [0, 1]$ which will dictate the rightwards and leftward jumps respectively of 
$Z_i^{(v_{n-i+1})}$. 
For each $i = 1, \ldots, n$, the marked Poisson point process $R_i$ and $L_i$ consist of points $(t_k, w_k)_{k \geq 1}$ and $(\bar{t}_k, \bar{w}_k)_{k \geq 1}$ where $(t_k)_{k \geq 1}$ and $(\bar{t}_k)_{k \geq 1}$ are the points
of a Poisson point process of rate $1$ and $1/\epsilon$ respectively on $\mathbb{R}_{\geq 0}$. The $w_i$ and 
$\bar{w}_i$ are uniform random variables on the interval $[0, 1]$ which are independent of each other and $(t_i)_{i \geq 1}, (\bar{t}_i)_{i \geq 1}$. 

We define $R_i^{(v)}$ to be the subset of  $(t_i, w_i)_{i \geq 1}$ with $w_i > 1-v$ and $L_i^{(1/v)}$ to be the subset of $(\bar{t}_i, \bar{w}_i)_{i \geq 1}$ with $\bar{w}_i > 1 - \epsilon/v$.
The projection onto the first co-ordinate of $R_i^{(v_i)}$ and $L_i^{(v_i)}$ give independent Poisson point process of rate $v_i$ and $1/v_i$ respectively which define coupled realisations of $Z_i^{(v_{n-i+1})}$ for any choice of 
$\epsilon < v < 1-\epsilon$. 

Almost surely, the suprema on the right hand side of Proposition \ref{semi_discrete_lpp} part (ii) all stablise (after some random time uniform over $(v_1, \ldots, v_n) \in (\epsilon, 1- \epsilon)^n$). For any realisation of the marked Poisson point processes, the right hand side of Proposition \ref{semi_discrete_lpp} part (ii)  is continuous 
in $(v_1, \ldots, v_n)$ except at $(1-w_i)_{i \geq 1}$ and $(\epsilon/(1-w_i))_{i \geq 1}$. 
Therefore the distribution of the right hand side of part (ii) of Proposition \ref{semi_discrete_lpp} is continuous 
in $(v_1, \ldots, v_n)$ on the set $(\epsilon, 1-\epsilon)^n$, and hence so is the distribution of $(Y_1^*, \ldots, Y_n^*)$. As $\epsilon$ is arbitrary this completes the proof.
\end{proof}

\begin{proof}[Proof of Theorem \ref{top_particle_theorem}]
Proposition \ref{push_asep_non_colliding} is the first equality in law.
At the end of Section \ref{suprema_non_colliding} we proved the second equality for distinct $0 < v_1, \ldots, v_n < 1$. Lemma \ref{continuity_invariant_measure} allows us to remove the constraint that the $v_i$ are distinct.
\end{proof}

\section{Push ASEP with a wall}
\label{push_asep}

\subsection{Transition probabilities}
We give a more explicit definition of Push-ASEP with a wall at the origin as a continuous-time Markov chain $(Y(t))_{t \geq 0} = (Y_1(t), \ldots,  Y_n(t))_{t \geq 0}$ taking values in $W^n_{\geq 0} = \{(y_1, \ldots, y_n) : y_i \in \mathbb{Z} \text { and } 0 \leq y_1 \leq y_2 \ldots \leq y_n\}$. We use $e_i$ to denote the vector taking value $1$ in position $i$ and zero otherwise.
The transition rates of $Y$ are defined for
$y, y + e_i + \ldots + e_j \in W^n_{\geq 0}$ and $i \leq j$  by
\begin{equation}
\label{defn_PushASEP1}
q(y, y+e_i + e_{i+1} + \ldots + e_j) = v_i 1_{\{ y_i =
y_{i+1} = \ldots = y_j < y_{j+1}\}} 
\end{equation}
with the notation $y_{n+1} = \infty$
and for $y \in W^n_{\geq 0}$ by
\begin{equation}
\label{defn_PushASEP2}
q(y, y-e_i) = v_i^{-1} 1_{\{ y - e_i \in W^n_{\geq 0}\}}.
\end{equation} 
All other transition rates equal zero.
We note that in \cite{borodin2008} the particles were strictly ordered, whereas it is convenient for us to 
consider a weakly ordered system; these systems can be related by a co-ordinate change 
$x_j \rightarrow x_j + j -1$.  

To describe the transition probabilities we first introduce the operators acting on functions $f: \mathbb{Z} \rightarrow \mathbb{R}$ with $v > 0$,
\begin{equation*}
D^{(v)}f(u) = f(u) - vf(u-1), \qquad J^{(v)}f(u) = \sum_{j = u}^{\infty} v^{u - j} f(j),
\end{equation*}
where we will always apply $J^{(v)}$ to functions with superexponential decay at infinity.
We use $D^{(v_1, \ldots, v_n)} = D^{(v_1)} \ldots D^{(v_n)}$ and 
$J^{(v_1, \ldots, v_n)} = J^{(v_1)} \ldots J^{(v_n)}$ as notation for concatenated operators
and $D_u^{(v)}, J_u^{(v)}$ to specify a variable $u$ on which the operators act.

We recall Siegmund duality for birth-death processes, see for example \cite{clifford_sudbury, cox_rosler}. 
Let $(X_t)_{t \geq 0}$ denote a birth-death process on the state space $\mathbb{Z}_{\geq 0}$ with transition rates:
\begin{equation*}
i \rightarrow i+1 \text{ rate } \lambda_i \text{ for } i \geq 0, \qquad i \rightarrow i - 1 \text{ rate } \mu_i \text{ for } i \geq 1.
\end{equation*}
Let $(X^*_t)_{t \geq 0}$ denote a birth-death process on the state space $\mathbb{Z}_{\geq -1}$ 
with transition rates
\begin{equation*}
i \rightarrow i+1 \text{ rate } \mu_{i+1} \text{ for } i \geq 0, \qquad i \rightarrow i-1 \text{ rate } \lambda_i
\text{ for } i \geq 0.
\end{equation*}
The process $X$ has a reflecting boundary at zero while $X^*$ is absorbed at $-1$.
Under suitable conditions on the rates, see \cite{cox_rosler}, which hold in the case
of interest to us: $\lambda_i = v_1$ for $i \geq 0$
and $\mu_i = v^{-1}_1$ for $i \geq 1$, Siegmund duality states that 
\begin{equation*}
P_{x}(X_t \leq y) = P_{y}(X_t^* \geq x) \quad \text{ for all } t \geq 0, \quad x, y \in \mathbb{Z}_{\geq 0}.
\end{equation*}
We can find the transition probabilities for $X^*$ by solving the Kolmogorov forward equation.
We define for any $t \geq 0$ and $x, y \in \mathbb{Z}$,
\begin{equation*}
\psi_t(x, y) = \frac{1}{2\pi i} \oint_{\Gamma_0} \frac{dz}{z} (z^{y-x} - z^{x + y + 2} ) e^{t(z + 1/z)}
\end{equation*}
where $\Gamma_0$ denotes the unit circle oriented anticlockwise. The transition probabilities of $X^*$ are given for $x \in \mathbb{Z}_{\geq 0}$ and $t \geq 0$ by $P_{x}(X^*_t = y)
= v_1^{y - x} e^{-t(v_1 + 1/v_1)} \psi_t(x, y)$ for 
$y \in \mathbb{Z}_{\geq 0}$ and $P_x(X^*_t = -1) = 1 - \sum_{y \geq 0} v_1^{y - x} e^{-t(v_1+1/v_1)} \psi_t(x, y)$. 
 
By using Siegmund duality, the transition probabilities of PushASEP with a wall with a single particle (which is an M/M/1 queue)
are given by 
\begin{equation*}
v_1^{y - x} e^{-t(v _1+ 1/v_1)} D_{y}^{(1/v_1)} J_{x}^{(v_1)}  \psi_t(x, y) \quad \text{ for all } t \geq 0, \quad x, y \in \mathbb{Z}_{\geq 0}.
\end{equation*}
The purpose of the above is that this now provides a form which is convenient to generalise
to $n$ particles. 
Define for all $t \geq 0$ and  $x, y \in \mathbb{Z}^n$, 
\begin{equation*}
r_t(x, y) = \prod_{k=1}^n v_k^{y_k - x_k} e^{-t(v_k + 1/v_k)}\text{det}(F_{ij}(t; x_i+i-1, y_j+j-1))_{i, j = 1}^n
 \end{equation*}
 with 
 \begin{equation*}
 F_{ij}(t; x_i + i - 1, y_j + j - 1) = D_{y_j}^{(1/v_1 \ldots 1/v_j)} J_{x_i}^{(v_1 \ldots v_i)} \psi_t(x_i + i - 1, y_j + j - 1). 
 \end{equation*}

\begin{proposition}
The transition probabilities of $(Y_1(t), \ldots, Y_n(t))_{t \geq 0}$ are given by $r_t(x, y)$ for $x, y \in W^n_{\geq 0}$.
\end{proposition}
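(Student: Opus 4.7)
The plan is to characterise the transition probabilities by showing that $r_t(x, y)$ satisfies (i) the forward Kolmogorov equation associated with the generator of PushASEP with a wall, (ii) the initial condition $r_0(x, y) = 1_{\{x = y\}}$ for $x, y \in W^n_{\geq 0}$, and (iii) the boundary conditions enforcing the Weyl-chamber constraint $0 \leq y_1 \leq \ldots \leq y_n$. These three properties characterise the transition probabilities uniquely.

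The starting point is the one-particle analysis already described: $\psi_t$ satisfies the free lattice heat equation $\partial_t \psi_t(x, y) = \psi_t(x, y+1) + \psi_t(x, y-1) - 2 \psi_t(x, y)$ (immediate from the contour formula by bringing the time derivative past the integral), and the image term $-z^{x+y+2}$ enforces $\psi_t(x, -1) = 0$ (the two resulting contour integrals both evaluate to the same modified Bessel function). Together with the gauge $v_1^{y-x} e^{-t(v_1+1/v_1)}$ and Siegmund duality, this is exactly the single-particle kernel. For $n$ particles, I would note that $D^{(v)}$ and $J^{(v)}$ act only in the spatial variable and hence commute with the free-particle time evolution; each matrix entry $F_{ij}(t; \cdot, \cdot)$ therefore inherits a heat equation. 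After stripping off the factor $\prod_k v_k^{y_k - x_k} e^{-t(v_k + 1/v_k)}$, the determinant $\det(F_{ij})$ satisfies an $n$-body free master equation on the interior $0 < y_1 < \ldots < y_n$, which matches the PushASEP generator there.

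The main obstacle is the boundary analysis. At collision boundaries $y_i = y_{i+1}$ the determinantal structure must convert free leftward and rightward jumps into the correct pushing interactions in \eqref{defn_PushASEP1}. I would mimic the strategy of \cite{borodin2008}, exploiting the nesting identities $D^{(1/v_1 \ldots 1/v_{j+1})} = D^{(1/v_{j+1})} D^{(1/v_1 \ldots 1/v_j)}$ (and similarly for $J$) so that column relations in the determinant collapse coincident indices and produce precisely the rate $v_i$ on the cluster event $\{y_i = \ldots = y_j < y_{j+1}\}$. At the wall boundary $y_1 = 0$ I would use that $\psi_t(x, -1) = 0$, combined with the shift conventions in the indices $y_j + j - 1$ and the form of $D^{(1/v_1 \ldots 1/v_j)}$, to show that the leftward-jump term for the first particle in the forward equation is annihilated exactly when $y_1 = 0$, encoding the reflection at the wall via \eqref{defn_PushASEP2}.

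Finally, the initial condition follows from $\psi_0(x, y) = 1_{\{y = x\}} - 1_{\{y = -x - 2\}}$ (read off the contour integral at $t = 0$), so that on $W^n_{\geq 0}$ only the direct term contributes; the telescoping of the nested $D$ and $J$ operators acting on Kronecker deltas renders the matrix $F_{ij}(0; x_i + i - 1, y_j + j - 1)$ triangular with diagonal entries $1_{\{x_i = y_i\}}$, giving $r_0(x, y) = 1_{\{x = y\}}$. The chief difficulty in executing this plan is the combinatorial bookkeeping at the collision boundaries, since the nested operator products in both $D$ and $J$ make the determinantal cancellations more intricate than in the wall-free PushASEP of \cite{borodin2008}, and one has to check compatibility of these cancellations with the additional image structure that implements the wall.
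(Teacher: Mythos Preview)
Your proposal is correct and follows essentially the same route as the paper: derive a free $n$-body forward equation from the heat equation for $\psi_t$, then convert the terms with $y\pm e_k\notin W^n_{\ge 0}$ into the PushASEP push/block/wall rates via determinantal column identities (the paper's key relation is that $y_j=y_{j+1}$ forces $v_j^{-1}r_t(x,y+e_j)=v_{j+1}^{-1}r_t(x,y)$ by equality of two columns), and finally check the initial condition by showing the matrix degenerates to a product of Kronecker deltas. Two small corrections: the paper's $\psi_t$ obeys $\partial_t\psi_t=\psi_t(\cdot,\cdot+1)+\psi_t(\cdot,\cdot-1)$ \emph{without} a $-2\psi_t$ term (the diagonal comes instead from the prefactor $e^{-t(v_k+1/v_k)}$), and the $J^{(v_1\ldots v_i)}$ operators act only in the $x$-variable, so they are inert in the $y$-boundary analysis and the collision bookkeeping is no harder than in \cite{borodin2008}; the genuinely new ingredient at the wall is that beyond $\psi_t(\cdot,-1)=0$ one also uses the image antisymmetry $\psi_t(\cdot,-2)=-\psi_t(\cdot,0)$.
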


The transition probabilities for PushASEP in the absence of a wall were found in \cite{borodin2008} and related
examples have been found in \cite{schutz, warren}. Our proof follows the ideas in \cite{borodin2008}.

\begin{proof}
Observe that for all $u, w \in \mathbb{Z}$, 
\begin{equation*}
\frac{d\psi}{dt} = \psi_t(u, w+1) + \psi_t(u, w-1)
\end{equation*}
and therefore for all $x, y \in W^n$,
\begin{equation}
\label{push_ASEP_free_eqn}
\frac{dr}{dt} = \sum_{k=1}^n v_k^{-1} r_t(x, y + e_k) + v_k r_t(x, y-e_k) -(v_k + v_k^{-1})r_t(x, y). 
\end{equation}
We note that $y \pm e_k$ may be outside of the set $W^n_{\geq 0}$ but that $r$ has been defined for 
all $x, y \in \mathbb{Z}^n$. The proof will involve showing that the terms involving $y \pm e_k \notin W^n_{\geq 0}$ in  \eqref{push_ASEP_free_eqn} can be replaced, using identities for $r$, by
terms corresponding to the desired pushing and blocking interactions. 

An important role is played by the identity, that if $y_j = y_{j+1}$ then 
\begin{equation}
\label{push_ASEP_identity}
v_j^{-1} r_t(x, y + e_j) = v_{j+1}^{-1} r_t(x, y). 
\end{equation}
This can be proved by showing that the difference of the two sides is equal to 
\begin{equation*}
\prod_{k = 1}^n v_k^{y_k - x_k} e^{-t(v_k + 1/v_k)} \text{det}(A_{ij})_{i, j = 1}^n
\end{equation*}
where the relevant columns of $A$ are the $k$-th and $(k+1)$-th which have entries for each $i = 1, \ldots, n$
given by
\begin{equation*}
A_{ik} = D_{y_{k+1}}^{(1/v_{k+1})} F_{ik}(x_i + i - 1, y_{k+1}+k), \qquad
A_{ik+1} = F_{i k+1}(x_i + i - 1, y_{k+1} + k).
\end{equation*}
These two columns are equal which proves \eqref{push_ASEP_identity}.

We first consider the terms in \eqref{push_ASEP_free_eqn} with $y - e_k \notin W^n_{\geq 0}$ 
and $y_k > 0$
which corresponds to right jumps with a pushing interaction. 
Denote by $m(k)$ the minimal index such that $y_{m(k)} = y_{m(k+1)} = \ldots = y_k$. 
Then by iteratively applying the identity \eqref{push_ASEP_identity} we obtain
\begin{IEEEeqnarray*}{rCl}
v_k r_t(x, y-e_k) & = & v_{k-1} r_t(x, y - e_{k-1} - 
e_{k}) = \ldots = v_{m(k)} r_t(x, y - e_{m(k)} - \ldots
- e_{k-1} - e_{k}). 
\end{IEEEeqnarray*}
This shows that,
\begin{multline}
\label{push_ASEP_push}
\sum_{k=1}^n v_k r_t(x, y - e_k) 1_{\{y_{m(k-1)} < y_{m(k)} = \ldots = y_k\}} - v_k r_t(x, y)
\\ 
= \sum_{k=1}^n v_{m(k)} r_t(x, y- e_{m(k)} - \ldots
- e_{k-1} - e_{k}) 1_{\{y_{m(k-1)} < y_{m(k)} = \ldots = y_k\}} - v_k r_t(x, y)
\end{multline}
where $y_{0} := 0$. We note that 
$y-e_{m(k)} - \ldots
-e_{k-1} - e_{k} \in W^n_{\geq 0}$ whenever $y_{m(k-1)} < y_{m(k)} = \ldots = y_k$ holds.

We next consider the terms in \eqref{push_ASEP_free_eqn} with $y + e_k \notin W^n_{\geq 0}$ which 
will correspond to blocking interactions.
This means that $y_k = y_{k+1}$ and using \eqref{push_ASEP_identity} shows that 
\begin{equation*}
\sum_{k=1}^{n-1} v_k^{-1} 1_{\{y_k = y_{k+1}\}} r_t(x, y + e_k)
= \sum_{k=2}^{n} v_k^{-1} 1_{\{y_{k-1} = y_{k}\}} r_t(x, y).
\end{equation*}
Therefore
\begin{multline}
\label{push_ASEP_block}
 \sum_{k=1}^n v_k^{-1} r_t(x, y + e_k) -v_k^{-1}r_t(x, y)
   =   -v_1^{-1} r_t(x, y) - \sum_{k=2}^n v_k^{-1}(1 - 1_{\{y_{k-1} = y_k\}} )
 r_t(x, y) \\
 + v_n^{-1} r_t(x, y+ e_n) + \sum_{k=1}^{n-1} v_k^{-1} (1- 1_{\{y_k = y_{k+1}\}})r_t(x, y+ e_k).
\end{multline} 
We note that $y+ e_k \in W^n_{\geq 0}$ whenever $(1- 1_{\{y_k = y_{k+1}\}}) \neq 0$.

The final terms we need to consider in \eqref{push_ASEP_free_eqn} are those with $y - e_k \notin W^n_{\geq 0}$ 
and $y_k = 0$ which correspond to left jumps which are suppressed by the wall.
If $y_1 = \ldots = y_k = 0$ for some $k > 1$, then 
\begin{equation*}
r_t(x, y-e_k) =  v_k^{-1} \prod_{j=1}^n v_j^{y_j - x_j} e^{-t(v_j + 1/v_j)} \text{det}(B_{ij})_{i, j = 1}^n 
\end{equation*}
for a matrix $B$ where the relevant entries of $B$ are the columns indexed by $1, \ldots, k$.
The first column has entries $B_{i1} = J_{x_i}^{(v_1 \ldots v_i)} 
(\psi(x_i - i+1, 0) - v_1^{-1} \psi(x_i - i+1, -1))$ which simplifies to $B_{i1} = J_{x_i}^{(v_1 \ldots v_i)} 
\psi(x_i - i+1, 0)$ by the fact that $\psi(\cdot, -1) = 0$. 
The columns indexed by $j = 2, \ldots, k-1$ can be simplified to
 $B_{ij} = J_{x_i}^{(v_1 \ldots v_i)} \psi(x_i - i+1, j-1)$ by using $\psi(\cdot, -1) = 0$ and column operations. 
Using this argument for the $k$-th column and that we consider the vector $y - e_k$, 
we observe that the $k$-th column is a linear combination of columns $1, \ldots, k-1$ and hence 
$r_t(x, y - e_k) = 0$ if $y_k = 0$ for any $k > 1$.

The remaining case is when $0 = y_1 < y_2$ and we show that
\begin{equation*}
v_1  r_t(x, y-e_1) - v_1^{-1} r_t(x, y) = 0.
\end{equation*}
This follows from multilinearity of the determinants involved in the definition of $r$ and using $\psi(\cdot, - 1) = 0$,
\begin{equation*}
D^{(1/v_1)} \psi(x_i, -1) -   v_1^{-1}  D^{(1/v_1)} \psi_t(x_i, 0) = 
-v_1^{-1}\psi(x_i, -2) - v_1^{-1} \psi_t(x_i, 0)  = 0. 
\end{equation*}
Therefore 
\begin{equation}
\label{push_ASEP_wall}
\sum_{k=1}^n v_k r_t(x, y-e_k)  1_{\{0 = y_1 = \ldots = y_k\}} = v_1^{-1} 1_{\{y_1 = 0\}}  r_t(x, y).
\end{equation}

We combine \eqref{push_ASEP_free_eqn}, \eqref{push_ASEP_push}, \eqref{push_ASEP_block}
and \eqref{push_ASEP_wall} to obtain that 
\begin{IEEEeqnarray}{rCl}
\label{forward_eq1}
\frac{dr}{dt} & = & \sum_{k=1}^n v_{m(k)} r_t(x, y-e_{m(k)} - \ldots
-e_{k-1} - e_{k})1_{\{y_{m(k-1)} < y_{m(k)} = \ldots = y_k\}} - \sum_{k=1}^n v_k r_t(x, y) \IEEEnonumber
\\ & &
 -v_1^{-1}(1- 1_{\{y_1 = 0\}}) r_t(x, y) 
  - \sum_{k=2}^n v_k^{-1}(1 - 1_{\{y_{k-1} = y_k\}} )
 r_t(x, y) \IEEEnonumber \\
 & &
 + v_n^{-1} r_t(x, y+ e_n) + \sum_{k=1}^{n-1} v_k^{-1} (1- 1_{\{y_k = y_{k+1}\}})r_t(x, y+ e_k).
\end{IEEEeqnarray}

We now consider the initial condition. 
\begin{equation}
\label{r_0_eqn}
r_0(x, y) = \prod_{k=1}^n v_k^{y_k - x_k} \text{det}(F_{ij}(0; x_i+i-1, y_j+j-1))_{i, j = 1}^n
 \end{equation}
 where 
  \begin{equation*}
 F_{ij}(0; x_i+i-1, y_j+j-1) = D_{y_j}^{(1/v_1 \ldots 1/v_j)} J_{x_i}^{(v_1 \ldots v_i)} \psi_0(x_i+i-1, y_j+j-1)
 \end{equation*}
 and $\psi_0(u, w) = 1_{\{w - u = 0\}}$ for $u, w \geq 0$ depends only on the difference $w-u$ 
 and we will view this as a function of $w-u$.
For any function $f : \mathbb{Z} \rightarrow \mathbb{R}$ and $u, w \in \mathbb{Z}$, $r > 0$
\begin{equation}
\label{JD_translation_invariance}
D_w^{(1/r)} J_u^{(r)} f(w - u) = D_w^{(1/r)} \left(\sum_{k = u}^{\infty} r^{u-k} f(w - k)\right) = f(w - u).
\end{equation}
Therefore the top-left entry in the matrix defining $r_0$ equals $1_{\{y_1 = x_1\}}$. 
Suppose $y_1 > x_1$ and observe that if a function $g$ has $g(u) = 0$ for $u > 0$, 
then for any $j=1, \ldots, n$ we have 
$D^{(1/v_2 \ldots 1/v_j)}_u g(u+j-1) = 0$ for $u > 0$. This shows that when $y_1 > x_1$ the top row of the matrix defining $r_0$ equals zero. 
In a similar manner, when $y_1 < x_1$ the first column in the matrix defining $r_0$ is zero. 
Therefore 
\begin{equation}
\label{r_0_inductive}
r_0(x, y) = 1_{\{x_1 = y_1\}} \prod_{k=2}^n v_k^{y_k - x_k} \text{det}(F_{ij}(0; x_i+i-1, y_j+j-1))_{i, j = 2}^n
 \end{equation}
and using \eqref{JD_translation_invariance} the entries of the matrix in \eqref{r_0_inductive} 
have the same form as the entries of the matrix in \eqref{r_0_eqn} but with $n-1$ particles. Continuing inductively, 
\begin{equation}
\label{forward_eq2}
r_0(x, y) = \prod_{k = 1}^n 1_{\{x_k = y_k\}}. 
\end{equation}

Therefore $r_t(x, y)$ satisfies the Kolmogorov forward equations \eqref{forward_eq1} and \eqref{forward_eq2} corresponding to 
the process $(Y(t))_{t \geq 0}$. These equations have a unique solution given by the transition probabilities of 
$(Y(t))_{t \geq 0}$ because the process does 
not explode. 
\end{proof}

\begin{lemma}
\label{ibp_lemma}
Let $(f_i)_{i =1}^n$ and $(g_j)_{j = 1}^n$ be functions $\mathbb{Z}_{\geq - 1} \rightarrow \mathbb{R}$ such that $g_j$ decays superexponentially while $f_i$ grows at most exponentially at infinity. 
\begin{enumerate}[(i)]
\item Suppose further that $f_i(-1) = 0$ for each $i = 1, \ldots, n$, 
\begin{multline*}
\sum_{x \in W^n_{\geq 0}} \text{det}(D^{(1/v_1 \ldots 1/v_j)} f_i(x_j + j -1 ))_{i, j = 1}^n
 \text{det}(J^{(v_1 \ldots v_i)} g_j(x_i + i -1 ))_{i, j = 1}^n \\ = 
 \text{det}\left(\sum_{u \geq 0} f_i(u) g_j(u)\right)_{i, j = 1}^n.
\end{multline*}
\item With no extra conditions and the notation $D^{\emptyset} = \text{Id}$,
\begin{multline*}
\sum_{x \in W^n_{\geq 0}} \text{det}(D^{(1/v_2 \ldots 1/v_j)} f_i(x_j + j -1 ))_{i, j = 1}^n
 \text{det}(J^{(v_2 \ldots v_i)} g_j(x_i + i -1 ))_{i, j = 1}^n \\ = 
 \text{det}\left(\sum_{u \geq 0} f_i(u) g_j(u)\right)_{i, j = 1}^n.
\end{multline*}
\end{enumerate}
\end{lemma}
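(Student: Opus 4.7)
The plan combines the one-dimensional integration-by-parts identity
\begin{equation*}
\sum_{u \geq 0} D^{(1/v)} f(u) \cdot J^{(v)} g(u) = \sum_{u \geq 0} f(u) g(u) \qquad (\star)
\end{equation*}
(valid when $f(-1) = 0$, proved by writing $D^{(1/v)} f(u) = f(u) - v^{-1} f(u-1)$ and telescoping the resulting double sum; the only boundary contribution is $-v^{-1} f(-1) J^{(v)} g(0)$, which vanishes) with a determinantal induction on $n$. Iterating $(\star)$ under the convention that $f$ is extended by zero to $u \leq -2$ yields $\sum_u D^{(1/v_1 \ldots 1/v_j)} f(u) \cdot J^{(v_1 \ldots v_j)} g(u) = \sum_u f(u) g(u)$ for any matched product.

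For part (i), I would substitute $u_j = x_j + j - 1$ to identify $W^n_{\geq 0}$ with $\{0 \leq u_1 < \ldots < u_n\}$ and proceed by induction on $n$. The base case is $(\star)$. For the inductive step, expand the first determinant along its last column and the second along its last row, producing
\begin{equation*}
\text{LHS} = \sum_{i, j = 1}^n (-1)^{i + j} \sum_{0 \leq u_1 < \ldots < u_{n-1}} \det A^{(i, n)} \det B^{(n, j)} \sum_{u_n > u_{n-1}} \phi_n f_i(u_n) \psi_n g_j(u_n),
\end{equation*}
where $\phi_k = D^{(1/v_1 \ldots 1/v_k)}$, $\psi_k = J^{(v_1 \ldots v_k)}$, and $A^{(i,n)}$, $B^{(n,j)}$ are the relevant $(n-1) \times (n-1)$ minors. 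These minors carry exactly the first $n - 1$ matched operator pairs, so the inductive hypothesis gives $\sum_{u_{<n}} \det A^{(i,n)} \det B^{(n,j)} = M^{[i, j]}$, the $(i,j)$-minor of $M = (\sum_u f_a(u) g_b(u))$. Splitting the $u_n$-sum as $\sum_{u_n \geq 0} - \sum_{u_n \leq u_{n-1}}$: the unconstrained piece evaluates to $M_{ij}$ by the iterated $(\star)$, and the combination $\sum_{i,j} (-1)^{i+j} M_{ij} M^{[i,j]}$ is the cofactor expansion of $\det(M)$ summed over each row $i$, contributing $n \det(M)$. The remaining "leaky" piece couples to the $(n-1)$-dimensional structure and, by a further rearrangement exploiting the matched column--row pairing of $\phi_n$ with $\psi_n$, evaluates to $(n-1) \det(M)$, leaving the net answer $\det(M)$.

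For part (ii), I would reduce to part (i) via the substitution
\begin{equation*}
\tilde{f}_i(u) = \sum_{m=0}^{u} v_1^{m-u} f_i(m) \quad (\text{with } \tilde{f}_i(-1) := 0), \qquad \tilde{g}_j(u) = g_j(u) - v_1^{-1} g_j(u + 1).
\end{equation*}
Direct computation gives $D^{(1/v_1)} \tilde{f}_i = f_i$ and $J^{(v_1)} \tilde{g}_j = g_j$; by commutativity of the operators, $D^{(1/v_2 \ldots 1/v_j)} f_i = D^{(1/v_1 \ldots 1/v_j)} \tilde{f}_i$ and $J^{(v_2 \ldots v_i)} g_j = J^{(v_1 \ldots v_i)} \tilde{g}_j$, so the LHS of (ii) for $(f, g)$ coincides with the LHS of (i) for $(\tilde{f}, \tilde{g})$. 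Since $\tilde{f}_i(-1) = 0$, $\tilde{g}_j$ inherits superexponential decay, and $\tilde{f}_i$ retains at-most-exponential growth, part (i) applies; a final invocation of $(\star)$ gives $\sum_u \tilde{f}_i(u) \tilde{g}_j(u) = \sum_u f_i(u) g_j(u)$, matching the right-hand side.

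The main obstacle is the evaluation of the leaky sum $\sum_{u_n \leq u_{n-1}} \phi_n f_i(u_n) \psi_n g_j(u_n)$ in the inductive step of part (i). Since the integrand $\det[A(u_j)] \det[B(u_i)]$ is \emph{not} symmetric in $(u_1, \ldots, u_n)$ (the column and row operators being index-dependent), a naive antisymmetrization fails, and the reduction of the leaky contribution to exactly $(n-1) \det(M)$ requires a careful combinatorial argument exploiting the matched operator structure together with the cofactor identities for $\det(M)$.
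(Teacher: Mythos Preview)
Your inductive scheme for part (i) has a genuine gap at exactly the place you flag: the ``leaky'' contribution $\sum_{u_n \leq u_{n-1}} \phi_n f_i(u_n)\,\psi_n g_j(u_n)$ is never evaluated. Because this partial sum depends on $u_{n-1}$, it couples to the $(n-1)$-dimensional integral in a way that prevents a second application of the inductive hypothesis, and you give no mechanism by which it should collapse to $(n-1)\det(M)$; the asserted value is reverse-engineered from the answer. As you yourself note, the integrand $\det(\phi_j f_i(u_j))\det(\psi_i g_j(u_i))$ is not symmetric in $(u_1,\ldots,u_n)$ (the operators depend on the variable index), so one cannot recover the missing region by permutation. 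Without a concrete argument here, the induction does not close.

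The paper sidesteps this difficulty entirely. Rather than inducting on $n$, it applies the \emph{finite} summation-by-parts identity
\[
\sum_{u=a}^b (D^{(1/v)} f)(u+1)(J^{(v)}g)(u+1) = \sum_{u=a}^b f(u)g(u) + f(b+1)(J^{(v)}g)(b+1) - f(a)(J^{(v)}g)(a)
\]
repeatedly inside the product of determinants, peeling off one operator at a time in a specific order of (variable, parameter) pairs. The key structural fact is that each boundary term produced is itself a determinant in which two columns (at the lower endpoint) or two rows (at the upper endpoint) coincide, or else vanishes by the decay hypothesis; hence every boundary term is zero and no leaky remainder ever appears. Once all operators are stripped, one is left with $\sum_{x \in W^n_{\geq 0}} \det(f_i(x_j-1))\det(g_j(x_i-1))$, and the Andr\'eief/Cauchy--Binet identity applies directly because the entries now depend only on the single variable, not on the column/row index. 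Your reduction of part (ii) to part (i) via $\tilde f_i, \tilde g_j$ is correct and attractive, but the paper's route is even simpler: since the $x_1$-variable carries no operator in (ii), one just omits the summation-by-parts step on $x_1$, and the boundary condition $f_i(-1)=0$ is never needed.
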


\begin{proof}
The proof is similar to Lemma 2 in \cite{FW} and so we give 
a description of the proof and 
refer to \cite{FW} which carries out some of the steps more explicitly. We prove (i) first and (ii) is almost identical.

We first observe that 
\begin{equation}
\label{ibp_identity}
\sum_{u = a}^b (D^{(1/v)}f)(u+1) (J^{(v)} g)(u+1) = \sum_{u = a}^b f(u) g(u) + f(b+1) (J^{(v)}g)(b+1) - f(a) (J^{(v)}g)(a).
\end{equation}
We apply \eqref{ibp_identity} repeatedly to show that 
\begin{multline}
\label{ibp}
\sum_{x \in W^n_{\geq 0}} \text{det}(D^{(1/v_1 \ldots 1/v_j)} f_i(x_j + j -1 ))_{i, j = 1}^n
 \text{det}(J^{(v_1 \ldots v_i)} g_j(x_i + i -1 ))_{i, j = 1}^n \\
 = \sum_{x \in W^n_{\geq 0}} \text{det}(f_i(x_j-1 ))_{i, j = 1}^n
 \text{det}( g_j(x_i-1 ))_{i, j = 1}^n.
\end{multline}
The general procedure is to use a Laplace expansion of the determinants on the left hand side, apply \eqref{ibp_identity} with a particular choice of variable and parameter, and then reconstruct the 
result as a sum of three determinants. A key property is that all of the boundary terms in \eqref{ibp_identity}
will end up contributing zero. 

The first application of this procedure is with the parameter $v_n$, variable $x_n$ and summing $x_n$  
from $x_{n-1}$ to infinity.
This shows that the left hand side of \eqref{ibp} equals a sum of three terms which all take the form:
\begin{equation*}
\sum_{\Sigma} \text{det}(A_{ij})_{i, j = 1}^n \text{det}(B_{ij})_{i, j = 1}^n.
\end{equation*}
In the first term,  $\Sigma = \{(x_1, \ldots, x_n) \in W^n_{\geq 0}\}$. The $A_{ij}$ are given by the entries of the first matrix on the left hand side of \eqref{ibp} except with the 
application of $D^{(1/v_n)}$ in the $n$-th column removed and the argument $x_n + n - 1$ replaced by 
$x_n + n - 2$. The $B_{ij}$ are given by the entries of the second matrix on the left hand side of \eqref{ibp} except with the 
application of $J^{(1/v_n)}$ in the $n$-th row removed and the argument $x_n + n - 1$ replaced by 
$x_n + n - 2$. 
There are two boundary terms which have $\Sigma = \{(x_1, \ldots, x_{n-1}) \in W_{n-1}^+\}$ and are
 evaluated at $x_n = x_{n-1}$ and $x_{n} = \infty$. These terms are both zero: 
 when evaluated at $x_n = x_{n-1}$ 
two columns in $A_{ij}$ are equal, and the boundary term at infinity vanishes due to the growth and decay conditions 
imposed on $f$ and $g$.

We continue this process of using \eqref{ibp_identity} with the following orders of parameters and variables:
$(x_n, v_n), (x_{n-1}, v_{n-1}), \ldots, (x_1, v_1), (x_{n-1}, v_n), (x_{n-2}, v_{n-1}), \ldots, (x_2, v_1), 
(x_n, v_1)$. For each $j=2, \ldots, n-1$ the sum in \eqref{ibp_identity} when applied to the $x_j$ variable is from $x_{j-1}$ to $x_{j+1}$ 
and all boundary terms are zero. In the generic case, the boundary term corresponding to the upper limit of summation 
is evaluated at $x_j = x_{j+1} + 1$ and is zero because two rows in the determinant of $B_{ij}$ are equal. When \eqref{ibp_identity} is applied to the $x_1$ variable
the sum is from $0$ to $x_2$ and the boundary term at zero vanishes by the condition that $f_i(-1) = 0$ 
for each $i = 1, \ldots, n$. 

This proves \eqref{ibp} and we apply the Cauchy-Binet (or Andr\'eief) identity to the right hand side of 
\eqref{ibp}
to complete the proof of part (i).

Part (ii) is identical except that we do not apply \eqref{ibp_identity} to the $x_1$ variable. Thus the condition
$f_i(-1) = 0$ for each $i=1, \ldots, n$ can be omitted.
\end{proof}

\subsection{Invariant measure}
\begin{proposition}
\label{Invariant_measure}
Let $(Y_1^*, \ldots, Y_n^*)$ be distributed according to the 
invariant measure of PushASEP with a wall and suppose that 
the rates $0 < v_1, \ldots, v_n < 1$ are distinct. Then the probability mass function of $(Y_1^*, \ldots, Y_n^*)$ 
is given by
\begin{equation*}
 \pi(x_1, \ldots, x_n) = c_n\prod_{k=1}^n v_k^{x_k} \text{det}( D^{(1/v_1 \ldots 1/v_j)} \phi_i(x_j + j - 1))_{i, j = 1}^n
\end{equation*}
where $\phi_i(x) =  v_i^{-(x+1)} - v_i^{x+1}$ and 
$c_n = \prod_{1 \leq i < j \leq n}  \frac{1}{(v_i - v_j)} \prod_{j = 1}^n v_j^n$.
\end{proposition}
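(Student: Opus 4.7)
My plan is to verify directly that $\pi$ is invariant for PushASEP with a wall, using the transition probability $r_t(x, y)$ from the previous proposition together with Lemma~\ref{ibp_lemma}(i); the constant $c_n$ is then pinned down by normalisation and irreducibility of the chain.

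\textbf{Invariance step.} Combining the formulas for $\pi(x)$ and $r_t(x, y)$, the factors $v_k^{x_k}$ and $v_k^{-x_k}$ cancel. Since $D^{(1/v_1 \ldots 1/v_j)}_{y_j}$ and $J^{(v_1 \ldots v_i)}_{x_i}$ act on disjoint variables and so commute, the $(i,j)$ entry of the second determinant in $r_t$ can be written as $J^{(v_1 \ldots v_i)} g_j(x_i + i - 1)$ with $g_j(u) := D^{(1/v_1 \ldots 1/v_j)}_{y_j}\psi_t(u, y_j + j - 1)$. The product $\pi(x) r_t(x, y)$ then has exactly the shape required by Lemma~\ref{ibp_lemma}(i) with $f_i = \phi_i$ and $g_j$ as above; the hypotheses hold since $\phi_i(-1) = v_i^0 - v_i^0 = 0$ and $g_j(u)$ inherits super-exponential decay in $u$ from the Bessel representation $\psi_t(u, w) = I_{w-u}(2t) - I_{w+u+2}(2t)$. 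The lemma collapses the sum over $x \in W^n_{\geq 0}$ to
\begin{equation*}
\sum_x \pi(x) r_t(x, y) = c_n \prod_{k=1}^n v_k^{y_k} e^{-t(v_k + 1/v_k)} \det\!\left(D^{(1/v_1 \ldots 1/v_j)}_{y_j} \sum_{u \geq 0} \phi_i(u) \psi_t(u, y_j + j - 1)\right)_{i, j = 1}^n.
\end{equation*}

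\textbf{Eigenfunction identity.} The heart of the argument is
\begin{equation*}
\sum_{u \geq 0} \phi_i(u) \psi_t(u, w) = e^{t(v_i + 1/v_i)} \phi_i(w) \qquad \text{for all } w \in \mathbb{Z}_{\geq 0}.
\end{equation*}
I prove this by expanding $\phi_i(u) = v_i^{-(u+1)} - v_i^{u+1}$ and $\psi_t$ into two terms each, reindexing the four resulting sums via either $k = w - u$ or $m = w + u + 2$, and invoking $\sum_{k \in \mathbb{Z}} I_k(2t) v_i^k = e^{t(v_i + 1/v_i)}$. Three pieces combine to produce $e^{t(v_i + 1/v_i)}\phi_i(w)$, and the remaining tail terms pair as $\sum_{k \geq -1} I_{k+w+2}(2t) \phi_i(k) - \sum_{k \geq 0} I_{k+w+2}(2t) \phi_i(k) = I_{w+1}(2t) \phi_i(-1) = 0$. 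The cancellation is forced by the root $\phi_i(-1) = 0$ together with the reflection symmetries $\phi_i(-u-2) = -\phi_i(u)$ and $\psi_t(-u-2, w) = -\psi_t(u, w)$. Substituting the identity back, pulling $D^{(1/v_1 \ldots 1/v_j)}_{y_j}$ outside the sum over $u$ by linearity, and distributing $e^{-t(v_i + 1/v_i)}$ into row $i$ of the determinant by multilinearity cancels the $e^{t(v_i + 1/v_i)}$ factors row-by-row, leaving exactly $\pi(y)$.

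\textbf{Normalisation.} PushASEP with a wall is irreducible on $W^n_{\geq 0}$: the origin communicates with every state via rightward pushes and leftward jumps. Hence the invariant probability measure is unique, our $\pi$ is a scalar multiple of it, and $c_n$ is determined by $\sum_y \pi(y) = 1$ (positivity is then automatic). To compute the sum explicitly I would set $\tilde y_j = y_j + j - 1$ and use the factorisation $\det(D^{(1/v_1 \ldots 1/v_j)}\phi_i(\tilde y_j))_{i,j=1}^n = D^{(1/v_1)}_{\tilde y_1} \cdots D^{(1/v_1 \ldots 1/v_n)}_{\tilde y_n} \det(\phi_i(\tilde y_j))_{i,j=1}^n$, valid because each operator in the product on the right acts only on its own $\tilde y_j$. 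Up to sign the inner determinant is the numerator of a symplectic Schur function, evaluable via \eqref{fulton_harris_eqn}, and Andr\'eief's identity reduces $\sum_y \pi(y)$ to the claimed expression for $c_n$. The principal obstacle is the eigenfunction identity above: elementary in content but delicate in bookkeeping, its cancellation hinges on the root $\phi_i(-1) = 0$ that is simultaneously built into the definition of $\phi_i$ and used in the hypothesis of Lemma~\ref{ibp_lemma}(i).
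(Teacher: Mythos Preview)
Your invariance step is exactly the paper's: apply Lemma~\ref{ibp_lemma}(i) with $f_i=\phi_i$ (using $\phi_i(-1)=0$) to collapse $\sum_x \pi(x)r_t(x,y)$ to a single determinant. The difference is in how you establish the eigenfunction identity $\sum_{u\geq 0}\phi_i(u)\psi_t(u,w)=e^{t(v_i+1/v_i)}\phi_i(w)$. The paper observes that $\phi_i(X^*_t)e^{-(v_i+1/v_i)t+2t}$ is a martingale for the absorbed walk $X^*$ with transition probabilities $e^{-2t}\psi_t$, so the identity is just the martingale property at $X^*_0=w$ (the absorbing state contributes nothing since $\phi_i(-1)=0$), together with the symmetry $\psi_t(u,w)=\psi_t(w,u)$. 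Your Bessel-function computation is correct and reaches the same conclusion, but the martingale viewpoint explains \emph{why} the cancellation happens rather than verifying it term by term.

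Where you genuinely diverge from the paper is the normalisation. The paper does not compute $c_n$ here at all: it defers both positivity and the value of $c_n$ to the proof of Theorem~\ref{main_theorem}, where $\pi$ is identified as the probability mass function of the LPP vector $\mathbf{G}^{\text{pl}}(n)$, which is automatically a probability measure. Your proposed direct route via Andr\'eief does not work as written: after your factorisation you are summing a \emph{single} determinant times the product weight $\prod_k v_k^{y_k}$ over $W^n_{\geq 0}$, and Andr\'eief/Cauchy--Binet needs a product of two determinants. A telescoping summation in the spirit of the paper's computation of $F(\eta)$ in Section~\ref{point_to_line} could plausibly be made to work, but that is not what you outlined. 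In short: your invariance argument is sound and essentially the paper's, your eigenfunction proof is a valid alternative to the martingale one, but your normalisation sketch has a gap and the paper sidesteps it entirely by appealing forward to the LPP identification.
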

We note that the Markov chain is irreducible, does not explode and 
the invariant measure is unique when normalised.

\begin{proof}
We use Lemma \ref{ibp_lemma}, noting that $\phi_i(-1) = 0$ for each $i$ and that the conditions at infinity are satisfied, to find
\begin{equation*}
\sum_{x \in W^n_{\geq 0}} \pi(x) r_t(x, y) 
= c_n \prod_{k=1}^n v_k^{y_k} e^{-t(v_k + 1/v_k)} \text{det}\left(D_{y_j}^{(1/v_1 \ldots 1/v_j)} \sum_{u \geq 0} 
\phi_i(u) \psi_t(u, y_j + j -1 )\right)_{i, j = 1}^n.
\end{equation*}
We recall that $\psi$ is related to the transition probabilities
of a process $(X^*_t)_{t \geq 0}$ defined through two independent Poisson point processes $N_t^{(1)}$ and $N_t^{(2)}$ of rate 
$1$ as $X_t^* = N_t^{(1)} - N_t^{(2)}$ for all $0 \leq t \leq \tau_{-1}$ and $X_t^* = -1$ for all $t > \tau_{-1}$, where 
$\tau_{-1} = \inf\{t\geq 0: N_t^{(2)} = N_t^{(1)}+1\}$. 
The transition probabilities of $X^*$ are given for $\xi \geq 0$ by $P_{\xi}(X_t^* = \eta) = e^{-2t} \psi_t(\xi, \eta)$ for $\eta \geq 0$ and 
$P_{\xi}(X_t^* = -1) = 1 - \sum_{\eta \geq 0} e^{-2t} \psi_t(\xi, \eta)$. On the other hand, 
$(v^{-(X^*_t + 1)} - v^{X_t^*+1}) e^{-(v + 1/v)t + 2t}$ is a martingale for $X^*$ for any $v > 0$.  
In particular, 
\begin{equation*}
\sum_{u \geq 0}  \psi_t(y_j + j -1, u)\phi_i(u) e^{-t(v_i + 1/v_i)} = \phi_i(y_j + j - 1).
\end{equation*}
Using this and the fact that $\psi$ is symmetric in the right hand side of the first displayed equation in this proof
shows that 
\begin{equation*}
\sum_{x \in W^n_{\geq 0}} \pi(x) r_t(x, y)  = c_n \prod_{k=1}^n v_k^{y_k} \text{det}(D^{(1/v_1 \ldots 1/v_j)} \phi_i(y_j + j - 1))_{i, j = 1}^n = \pi(y).
\end{equation*}
We defer the proof that $\pi$ is positive and the identification of the normalisation constant. These two 
properties will follow by identifying $\pi$ as 
the probability mass function for a vector of last passage percolations times in the proof of Theorem \ref{main_theorem}.
\end{proof}

\section{Point-to-line last passage percolation}
\label{point_to_line}

Point-to-line last passage percolation can be interpreted as an interacting particle system, 
where at each time step a new particle is added at the origin and particles interact by pushing particles to the right of them. 
We define a discrete-time Markov chain
denoted $(\mathbf{G}^{pl}(k))_{1 \leq k \leq n}$ where $\mathbf{G}^{pl}(k) = (G_1^{pl}(k), \ldots, G_k^{pl}(k))$. 
The particles are updated between time $k-1$ and time $k$ 
by sequentially defining $G_1^{pl}(k), \ldots, G_n^{pl}(k)$
starting with
$G_1^{pl}(k) = g_{n-k+1, k}$ and then applying the update rule 
\begin{equation}
\label{update_rule_lpp}
G_j^{pl}(k) = \max(G_j^{pl}(k-1), G_{j-1}^{pl}(k)) + g_{n-k+1, k-j+1}, \qquad \text{ for } 2 \leq j \leq k
\end{equation}
where $(g_{jk})_{j, k \geq 1, j+k\leq n+1}$ are an independent collection of geometrically distributed random variables with parameters $1 - v_j v_{n-k+1}$ and $0 < v_j < 1$ for each $j = 1, \ldots, n$. 
The geometric random variables are defined as
$P(g_{jk} = u) = (1-v_j v_{n-k+1}) (v_j v_{n-k+1})^{u}$ for all $u \geq 0$.

The initial state is $G_1^{pl} = g_{n1}$. 
The connection to point-to-line last passage percolation is that the largest particle at time $n$ has the representation 
\begin{equation*}
G_n^{pl}(n) = \max_{\pi \in \Pi_{n}} \sum_{(i, j) \in \pi} g_{ij}
\end{equation*}
where $\Pi_{n}^{\text{flat}}$ is the set of directed up-right paths nearest neighbour paths from $(1, 1)$ to the line 
$\{(i, j):i+j = n+1\}$. 
Moreover, $\mathbf{G}^{pl}(n)$ is the vector on the right hand side of Theorem \ref{main_theorem}.
The advantage of this interpretation is that the transition probabilities of $(\mathbf{G}^{pl}(k))_{1 \leq k \leq n}$ have a determinantal
form and this can be used to find the probability mass function of $\mathbf{G}^{pl}(n)$ as a determinant.

In the context of point-to-point last passage percolation the transition kernel of a Markov chain analogous to the above
is given in Theorem 1 of \cite{dieker2008}. This can be used to describe the update rule of
$\mathbf{G}^{pl}(n-1)$ to $\mathbf{G}^{pl}(n)$ from time $n-1$ to time $n$ by viewing $\mathbf{G}^{pl}(n-1)$ as being extended to an $n$-dimensional 
vector with zero as the leftmost position. 
We first define: for functions $f : \mathbb{Z} \rightarrow \mathbb{R}$ with $f(u) = 0$ for all $u < 0$,
\begin{equation*}
D^{(v)}f(u) = f(u) - vf(u-1), \qquad I^{(v)}f(u) = \sum_{j=0}^u v^{u-j} f(j) \text{ for } u \geq 0
\end{equation*}
and $I^{(v)} f(u) = 0$ for $u < 0$.
Suppose $p^{-1} = (1/p_1, \ldots, 1/p_n)$ and 
for a function $g : \mathbb{Z} \rightarrow \mathbb{R}$ with $g(u) = 0$ for $u < 0$ define
\begin{equation}
\label{function_integral_derivatives}
g_{p^{-1}}^{(ij)}(u) = \begin{cases} 
D^{(1/p_{i+1} \ldots 1/p_j)} g(u) & \text{ for } j > i \\
I^{(1/p_{j+1} \ldots 1/p_i)} g(u) & \text{ for } j < i \\
g(u) & \text{ for } j = i. 
\end{cases}
\end{equation}

\begin{lemma}[Dieker, Warren \cite{dieker2008}]
\label{lpp_transition_density}
As above, suppose the geometric random variables $g_{1, n-j+1}$ used in the update rule \eqref{update_rule_lpp} from $\mathbf{G}^{\text{pl}}(n-1)$ 
to $\mathbf{G}^{\text{pl}}(n-1)$ have parameters $1-v_1 v_j$ for $j = 1, \ldots, n$.
Then
\begin{multline*}
P(\mathbf{G}^{pl}(n) = (y_1, \ldots, y_n) \vert \mathbf{G}^{pl}(n-1) = (x_2, \ldots, x_n)) \\
= \prod_{k=1}^n (1- v_1 v_k) (v_1 v_k)^{y_k - x_k} \text{det}(w_{1, (1/(v_1 v_k))_{k=1}^n}^{(ij)}(y_j - x_i + j - i))_{i, j = 1}^n
\end{multline*}
where $x_1 := 0$ and $w_1(u) = 1_{\{u \geq 0\}}$.
\end{lemma}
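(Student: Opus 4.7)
The plan is to invoke Theorem 1 of \cite{dieker2008}, which supplies a determinantal one-step transition kernel for a sequentially-pushed particle chain with geometric increments; the only work is to match our setup to theirs and to read off the parameters correctly.

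First, I would adopt the convention $x_1 := 0$ so that the update \eqref{update_rule_lpp} at time $n$ reads
\[
y_1 = g_{1, n}, \qquad y_j = \max(x_j, y_{j-1}) + g_{1, n-j+1} \text{ for } 2 \leq j \leq n,
\]
which is exactly the pushing cascade studied in \cite{dieker2008}, once the inserted particle at the origin is viewed as the leftmost coordinate of an $n$-particle vector. Next I would match parameters: the geometric increment in slot $j$ has parameter $1 - v_1 v_j$, so Dieker--Warren's parameter $p_j$ is played by $v_1 v_j$ and hence $1/p_j$ by $1/(v_1 v_j)$. The diagonal normalisation pulled out of the determinant is then $\prod_{k=1}^n (1 - v_1 v_k)(v_1 v_k)^{y_k - x_k}$, matching the stated prefactor, and the function $w_1(u) = 1_{\{u \geq 0\}}$ plays the role of their indicator initial datum, encoding that the shifted arguments $y_j - x_i + j - i$ must be non-negative on the support.

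The main obstacle is purely bookkeeping. One has to verify that our operators $D^{(v)}$ and $I^{(v)}$ from \eqref{function_integral_derivatives} align with those in \cite{dieker2008} (including the direction of the products $D^{(1/p_{i+1} \ldots 1/p_j)}$ for $j > i$ versus $I^{(1/p_{j+1} \ldots 1/p_i)}$ for $j < i$), that the shifts $y_j - x_i + j - i$ arise naturally from their Weyl-chamber ordering, and that the extension by $x_1 := 0$ introduces no boundary term beyond the one captured by $w_1$. Once this dictionary is in place, no further probabilistic argument is needed: the determinantal formula is transported from \cite[Theorem 1]{dieker2008} to the form stated here.
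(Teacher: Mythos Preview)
Your proposal is correct and matches the paper's treatment: the paper does not give an independent proof of this lemma but simply attributes it to Theorem~1 of \cite{dieker2008}, remarking that the original proof uses the RSK correspondence (with more direct arguments available in \cite{johansson2010} and \cite{FW}). Your plan to invoke that theorem and carry out the notational dictionary is exactly what is needed, and indeed slightly more detailed than what the paper itself records.
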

The proof uses the RSK correspondence; a more direct proof is given in the case with all parameters equal in \cite{johansson2010} and with the geometric replaced by exponential data in \cite{FW}.

We will iteratively apply these one-step updates and use the following lemma to find the probability 
mass function for $\mathbf{G}^{pl}(n)$ as a single determinant.
\begin{lemma}
\label{ibp_lpp}
Suppose that $p = (p_1, \ldots, p_n)$ and $p_i > 0$ for $i = 1, \ldots, n$.
Let $(f_i)_{i = 1}^n$ be a collection of functions from $\mathbb{Z}_{\geq 0} \rightarrow \mathbb{R}$ 
and $g : \mathbb{Z} \rightarrow \mathbb{R}$ with $g(u) = 0$ for all $u < 0$. Then
\begin{multline*}
\sum_{x \in W^n_{\geq 0}} \text{det}(D^{(1/p_2 \ldots 1/p_j)} f_i(x_j + j - 1))_{i, j = 1}^n 
\text{det}( g_{1, p^{-1}}^{(ij)}(y_j - x_i + j-i))_{i, j = 1}^n \\
= \text{det}\left( D^{(1/p_2 \ldots 1/p_j)} \sum_{u \geq 0} 
f_i(u) g_1(y_j + j - 1 - u)\right)_{i, j = 1}^n. 
\end{multline*}
\end{lemma}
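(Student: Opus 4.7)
The plan is to adapt the iterated summation-by-parts strategy used in Lemma \ref{ibp_lemma}, mimicking in particular part (ii) there whose operators also skip the parameter index $1$. The key algebraic identity is that, on functions supported on $\mathbb{Z}_{\geq 0}$, the operators $D^{(1/v)}$ and $I^{(1/v)}$ are mutually inverse, $D^{(1/v)} I^{(1/v)} = \mathrm{Id}$, by a direct telescoping computation.

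First I would rewrite the right-hand side in a form directly comparable to the left-hand side. Since $D^{(1/p_2 \ldots 1/p_j)}$ acts on $y_j$, it can be pulled through the sum over $u$, and then Andr\'eief's identity, combined with the substitution $u_i = x_i + i - 1$ (a bijection between strictly increasing tuples $u_1 < \cdots < u_n$ with $u_i \geq 0$ and $x \in W^n_{\geq 0}$), rewrites the RHS as
\[
\sum_{x \in W^n_{\geq 0}} \det(f_i(x_j + j - 1))_{i,j=1}^n \, \det\bigl([D^{(1/p_2 \ldots 1/p_j)} g_1](y_j - x_i + j - i)\bigr)_{i,j=1}^n.
\]
The lemma then reduces to showing that the operators $D^{(1/p_2 \ldots 1/p_j)}$ in column $j$ of the $f$-determinant on the LHS can be transferred, via summation by parts, to act uniformly as $D^{(1/p_2 \ldots 1/p_j)}$ on the $(i,j)$ entry of the $g$-determinant, replacing the mixed operators in $g^{(ij)}_{1, p^{-1}}$.

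I would carry out this transfer one column at a time, for $j = n, n-1, \ldots, 2$, at each step summing by parts in the variable $x_j$ (noting that column $j$ of the $f$-determinant and row $j$ of the $g$-determinant both depend on $x_j$). Moving the operator $D^{(1/p_2 \ldots 1/p_j)}$ off column $j$ of the $f$-determinant and onto row $j$ of the $g$-determinant modifies entry $(j, k)$ of the $g$-determinant as follows: for $k < j$, the sub-factor $D^{(1/p_{k+1} \ldots 1/p_j)}$ of the transferred operator cancels the existing $I^{(1/p_{k+1} \ldots 1/p_j)}$ via $D \circ I = \mathrm{Id}$, leaving $D^{(1/p_2 \ldots 1/p_k)} g_1$; for $k = j$ the identity becomes $D^{(1/p_2 \ldots 1/p_j)} g_1$; and for $k > j$ the operator composes with the existing $D^{(1/p_{j+1} \ldots 1/p_k)}$ to give $D^{(1/p_2 \ldots 1/p_k)} g_1$. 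Each of these matches precisely the corresponding entry in the Andr\'eief-reduced RHS. Boundary terms vanish for reasons analogous to Lemma \ref{ibp_lemma}: upper-limit terms make two rows of the $f$-determinant coincide, and terms at infinity vanish by the condition $g(u) = 0$ for $u < 0$, which bounds the effective range of $u$. Since the operators involve only $p_k$ for $k \geq 2$, no summation by parts is performed in $x_1$, and the condition $f_i(-1) = 0$ required for Lemma \ref{ibp_lemma}(i) is not needed here.

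The main obstacle is the bookkeeping: verifying at each summation-by-parts step that the precise mix of $D$ and $I$ operators in $g^{(ij)}_{1, p^{-1}}$ transforms exactly as required, and that the resulting boundary contributions cancel in pairs. As this is a purely algebraic computation, entirely analogous to the one carried out explicitly in \cite{FW}, I expect no conceptual difficulty beyond careful index tracking.
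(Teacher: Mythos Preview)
Your overall strategy is sound and would work, but it is considerably more laborious than what the paper actually does. The paper's proof is a two-line reduction to Lemma~\ref{ibp_lemma}(ii): one observes that for $g$ supported on $\mathbb{Z}_{\geq 0}$ the operators $J^{(p)}$ and $I^{(1/p)}$ are related by reflection, namely $(J^{(p)} g(z-\cdot))(u) = (I^{(1/p)} g)(z-u)$, and then applies Lemma~\ref{ibp_lemma}(ii) with the choice $g_j(\cdot) = D^{(1/p_2\ldots 1/p_j)} g(y_j+j-1-\cdot)$. The $D\circ I = \mathrm{Id}$ cancellation you identify is precisely what makes $J^{(p_2\ldots p_i)} g_j$ collapse to the entries $g^{(ij)}_{1,p^{-1}}$. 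So rather than re-running the whole iterated summation-by-parts argument, the paper simply packages it as a call to the already-proved lemma.

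One caution about your outlined execution: the column-by-column order you describe (process all $j-1$ operators on column $j$ via repeated summation by parts in $x_j$, then move to column $j-1$) does not make the intermediate boundary terms vanish. After stripping the first operator $D^{(1/p_j)}$ from column $j$, the lower boundary term matches column $j-1$ and vanishes; but after stripping the second operator the boundary contribution to column $j$ is $f_i(x_{j-1}+\cdots)$ with no $D$'s, while column $j-1$ still carries $D^{(1/p_2\ldots 1/p_{j-1})}$, so neither the $f$-determinant nor the $g$-determinant degenerates. The order that works is the interleaved diagonal sweep used in the proof of Lemma~\ref{ibp_lemma} (and in \cite{FW}), where one strips one operator from each column in turn before returning for the next layer. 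Since you ultimately defer to that computation anyway, this is not a fatal gap, but your description of the order should be corrected; better still, just invoke Lemma~\ref{ibp_lemma}(ii) directly via the reflection identity as the paper does.
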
 

\begin{proof}
We have $g(u) = 0$ for all $u < 0$ which means that $(J^{(p)}g)(z - \cdot)(u) = (I^{(1/p)} g)(z - u)$. 
We apply Lemma \ref{ibp_lemma} part (ii) with the functions $g_j(\cdot) = D^{(1/p_2 \ldots 1/p_j)} g(y_j + j - 1 - \cdot)$
where $D^{\emptyset} = \text{Id}$. We note that as $g$ is zero in a neighbourhood of infinity the condition
on the growth of $f$ can be omitted.
\end{proof}

\subsection{Proof of Theorem \ref{main_theorem}}

\begin{proof}
We prove by induction on $n$ that the probability mass function for $\mathbf{G}^{\text{pl}}(n)$ is given by $\pi(x_1, \ldots, x_n)$ and note that the case $n=1$ holds. 
The proposed probability mass function for  $\mathbf{G}^{pl}(n-1)$
is \begin{equation*}
\pi(x_2, \ldots, x_n) = c_{n-1} \prod_{k=2}^n v_k^{x_k} \text{det}(D_{x_j}^{(1/v_2 \ldots 1/v_j)} \phi_i(x_j + j - 2))_{i, j = 2}^n  
\end{equation*}
where $\phi_i(u) = v_i^{-(u+1)} - v_i^{u+1}$ for each $i = 2, \ldots, n$
and $c_{n-1} = (\prod_{2 \leq i < j \leq n} (v_i - v_j))^{-1} \prod_{j = 2}^n v_j^{n-1}$. 

We define
\begin{equation}
\label{pi_hat_defn}
\hat{\pi}(x_1, \ldots, x_n) = c_{n-1} \prod_{k=2}^n v_k^{x_k} \text{det}(D_{x_j}^{(1/v_2 \ldots 1/v_j)} \hat{\phi}_i(x_j + j - 2))_{i, j = 1}^n  
\end{equation}
where $D^{\emptyset} = \text{Id}$,  $\hat{\phi}_1(u) = 1_{\{u = -1\}}$ and $\hat{\phi}_i = \phi_i$ for each $i = 2, \ldots, n$. 
We first show that for $(x_1, \ldots, x_n) \in W^n_{\geq 0}$,
\begin{equation}
\label{pi_hat_equality}
\hat{\pi}(x_1, \ldots, x_n) = 1_{\{x_1 = 0\}} \pi(x_2, \ldots, x_n).
\end{equation}

Consider a Laplace expansion of $\hat{\pi}$ where the summation is indexed by a permutation $\sigma$. 
If $\sigma(1) = 1$ then the top-left entry in the matrix defining $\hat{\pi}$ is given by $\hat{\phi}_1(x_1 - 1)$ which equals $1$ if $x_1=0$ and $0$ otherwise. Therefore the terms in the Laplace expansion with $\sigma(1) = 1$ will give the desired expression for $\hat{\pi}$ 
and we need to show the remaining terms in the Laplace expansion of $\hat{\pi}$ are zero. 

Let $\sigma(1) = j$ for some $2 \leq j \leq n$ and $\sigma(i) = 1$ for some $2 \leq i \leq n$.  
For any $2 \leq j \leq n$, the $(1, j)$ entry in the matrix in \eqref{pi_hat_defn}  is only non-zero if $x_j = 0$ by using the definition of $\hat{\phi}_1$. 
On the other hand, the $(i, 1)$ entry in the matrix in \eqref{pi_hat_defn} 
is given by $\hat{\phi}_i(x_1 - 1) = v_i^{x_1} - v_i^{-x_1} = 0$ if $x_1 = 0$. Therefore as $x_1 \leq x_j$ all terms
in the Laplace expansion of $\hat{\pi}$ with $\sigma(1) \neq 1$ are zero. This proves $\eqref{pi_hat_equality}$.

We use \eqref{pi_hat_defn}, \eqref{pi_hat_equality} and the update rule in Lemma \ref{lpp_transition_density} to 
find the probability mass function for $\mathbf{G}^{pl}(n)$ as
\begin{multline}
\label{G_density_formula1}
c_{n-1} \sum_{(x_2, \ldots, x_n) \in W^{n-1}_{\geq 0}} \prod_{k=2}^n v_k^{x_k} \text{det}(D_{x_j}^{(1/v_2 \ldots 1/v_j)} \hat{\phi}_i(x_j + j - 2))_{i, j = 1}^n\\
\cdot \prod_{k=1}^n (1- v_1 v_k) (v_1 v_k)^{y_k - x_k} \text{det}(w_{1, (1/(v_1 v_k))_{k=1}^n}^{(ij)}(y_j - x_i +j - i))_{i,j=1}^n
\end{multline}
where $x_1:=0$. 
We use the identities: 
\begin{equation*}
D^{(\alpha)}f(u) = \beta^u D^{(\alpha/\beta)}(f(u) \beta^{-u}), \qquad \quad I^{(\alpha)}f(u) = \beta^u I^{(\alpha/\beta)}(f(u) \beta^{-u})
\end{equation*}
to show that with $u = y_j - x_i + j - i$,
\begin{gather}
\label{change_parameters1}
v_1^{u}  I^{(1/(v_1 v_{j+1}), \ldots, 1/(v_1 v_i))}
(w_1(u))  =   I^{(1/v_{j+1}, \ldots, 1/v_i)}(
w_1(u) v_1^{u}) \text{ for } i > j \\
\label{change_parameters2}
v_1^{u} D^{(1/(v_1 v_{i+1}), \ldots, 1/(v_1 v_j))} (w_1(u)) = D^{(1/v_{i+1}, \ldots, 1/v_j)}(w_1(u) v_1^{u})
\text{ for } j > i.
\end{gather}
We use \eqref{change_parameters1} and \eqref{change_parameters2} in 
\eqref{G_density_formula1} to obtain the probability mass function for $\mathbf{G}^{pl}(n)$
as
\begin{multline*}
c_{n-1} \sum_{(x_2, \ldots, x_n) \in W^{n-1}_{\geq 0}} \prod_{k=1}^n v_k^{y_k} \text{det}\left(D_{x_j}^{(1/v_2 \ldots 1/v_j)} \hat{\phi}_i(x_j + j - 2)\right)_{i, j = 1}^n \\
\cdot \prod_{k=1}^n (1- v_1 v_k) \text{det}(\hat{w}_{1, v^{-1}}^{(ij)}(y_j - x_i +j - i))_{i,j=1}^n
\end{multline*}
where $\hat{w}_1(u) = w_1(u) v_1^u$ and $\hat{w}_{1, v^{-1}}^{(ij)}$ is defined by \eqref{function_integral_derivatives}.
We apply Lemma \ref{ibp_lpp} to show this equals
\begin{equation}
\label{G_formula_2}
c_{n-1} \prod_{k=1}^n (1- v_1 v_k) v_k^{y_k} \text{det}\left(D_{y_j}^{(1/v_2 \ldots 1/v_j)} 
\sum_{u = 0}^{y_j + j -1} \hat{\phi}_i(u-1) \hat{w}_1(y_j - u + j - 1) \right)_{i, j = 1}^n.
\end{equation}
In the case $i = 1$, recall $\phi_1(x) = v_1^{(x+1)} - v_1^{-(x+1)}$ and observe that 
\begin{equation}
\label{row_1_simplifications}
 \sum_{u = 0}^{y_j + j -1} \hat{\phi}_1(u-1) v_1^{y_j - u + j - 1} = v_1^{y_j + j - 1} = \frac{v_1}{1 - v_1^2} D^{(1/v_1)} \phi_1(y_j + j - 1).
 \end{equation}
 In the case $2 \leq i \leq n$ observe that 
 \begin{IEEEeqnarray}{rCl}
 \label{row_2_simplifications}
 \sum_{u = 0}^{y_j + j - 1} \phi_i(u - 1) v_1^{y_j - u + j - 1} & = &  \frac{v_i^{y_j + j}}{v_1 - v_i} -
  \frac{v_1^{y_j + j - 1}}{1-v_i/v_1}
 + \frac{v_1^{y_j + j -1}}{1 - 1/(v_1 v_i)} - \frac{v_i^{-y_j - j + 1}}{v_1 v_i - 1} \IEEEnonumber \\
 & = & \frac{v_i v_1}{(1-v_1 v_i)(v_1 - v_i)} D^{(1/v_1)} \phi_i(y_j + j - 1) + C v_1^{y_j}
 \end{IEEEeqnarray}
 where $C$ is independent of $y_j$.  
 Using \eqref{row_1_simplifications} and \eqref{row_2_simplifications} in \eqref{G_formula_2} and removing the terms $C v_1^{y_j}$ 
 using row operations we find 
 \begin{equation*}
 c_{n-1}\frac{v_1}{1 - v_1^2} \prod_{j = 2}^n  \frac{v_1 v_j}{(1 - v_1 v_j )(v_1 - v_j)}  \prod_{k = 1}^n (1- v_k v_1)  v_k^{y_k}  \text{det}(D_{y_j}^{(1/v_1 \ldots 1/v_j)} \phi_i(y_j + j - 1))_{i, j = 1}^n.  
 \end{equation*}
 The prefactor equals $c_n$ where $c_{n} = \prod_{1 \leq i < j \leq n}\frac{1}{(v_i - v_j)} \prod_{j = 1}^n v_j^n$ and so we establish inductively that the 
 probability mass function of $\mathbf{G}^{pl}(n)$ 
 is given by 
\begin{equation}
\label{lpp_prob_mass}
\pi(y_1, \ldots, y_n) = c_n \prod_{k=1}^n v_k^{y_k} \text{det}( D_{y_j}^{(1/v_1 \ldots 1/v_j)} \phi_i(y_j + j - 1))_{i, j = 1}^n.
\end{equation}
We recall that in Proposition \ref{Invariant_measure}, we deferred the proof of positivity of $\pi$ and the 
normalisation constant. This is now proven as 
we have identified $\pi$ as the probability mass function of $\mathbf{G}^{pl}(n)$. 
Moreover, Equation \eqref{lpp_prob_mass} and Proposition \ref{Invariant_measure} proves Theorem \ref{main_theorem} when $v_1, \ldots, v_n$ are distinct. The distribution of 
$(Y_1^*, \ldots, Y_n^*)$ is continuous in $(v_1, \ldots, v_n)$ on the set $(0, 1)^n$ from Lemma \ref{continuity_invariant_measure} and the distribution of $(G(1, n), \ldots, G(1, 1))$ is continuous in $(v_1, \ldots, v_n)$ on the same set as a finite number of operations of summation and maxima applied to geometric random 
variables. This
completes the proof of Theorem \ref{main_theorem}.
\end{proof}

\subsection{The largest particle}

\begin{proposition}
Let $F(\eta) = P(Y_n^* \leq \eta) = P(G(1, 1) \leq \eta).$
For distinct $0 < v_1, \ldots, v_n < 1$, 
\begin{equation*}
F(\eta) =  \prod_{1 \leq i \leq j \leq n} (1 - v_i v_j) \left(\prod_{j = 1}^n v_j\right)^{\eta} \text{Sp}_{\eta^{(n)}}(v_1, \ldots, v_n)
\end{equation*} 
where $\text{Sp}$ denotes the symplectic Schur function from \eqref{symplectic_schur_defn}, and $\eta^{(n)}$ denotes an $n$-dimensional vector $(\eta, \ldots, \eta)$. 
\end{proposition}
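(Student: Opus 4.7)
The plan is to combine Theorem \ref{top_particle_theorem} with equation \eqref{largest_particle_non_colliding_sup}, which together already contain essentially all the needed content. Theorem \ref{top_particle_theorem} gives $Y_n^* \stackrel{d}{=} \sup_{t \geq 0} Z_n^{\dagger}(t) \stackrel{d}{=} G(1,1)$, so the two asserted equalities $P(Y_n^* \leq \eta) = P(G(1,1) \leq \eta)$ hold automatically and it suffices to compute a single distribution function. The most convenient target is $\sup_{t \geq 0} Z_n^{\dagger}(t)$, since the ratio of persistence probabilities $\kappa_C h_C(x)/(\kappa_A h_A(x))$ computed in Section \ref{suprema_non_colliding} collapsed in the limit $x \to (-\eta, \ldots, -\eta)$ to exactly the desired product of factors $(1 - v_i v_j)$ times a symplectic Schur function at $\eta^{(n)}$. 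I would therefore simply invoke \eqref{largest_particle_non_colliding_sup} and read off the formula for $F(\eta)$.

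The only point requiring care is the parameter regime. Equation \eqref{largest_particle_non_colliding_sup} was derived under the strict ordering $0 < v_n < \ldots < v_1 < 1$, which was needed in the harmonic function proposition preceding it to ensure that all survival events have positive probability and that the killed walk drifts into the correct region of the Weyl chamber. To extend to arbitrary distinct $0 < v_1, \ldots, v_n < 1$ I would invoke the permutation symmetry noted at the end of Section \ref{suprema_non_colliding}: the law of the ordered walk $Z_n^{\dagger}(t)$ is invariant under permutations of $(v_1, \ldots, v_n)$, and the right-hand side of the displayed formula is visibly symmetric in the $v_i$ as well, since both $\text{Sp}_{\eta^{(n)}}(v)$ and $\prod_{i \leq j}(1 - v_i v_j) \cdot \prod_j v_j^\eta$ are symmetric functions of their arguments.

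I do not anticipate any serious obstacle: the proposition is essentially a summary of Theorem \ref{top_particle_theorem} together with \eqref{largest_particle_non_colliding_sup}. The mildest subtlety is ensuring that the formula is not sensitive to the ordering of the $v_i$, and this is handled by the symmetry argument above. An alternative, self-contained route would sum the probability mass function from Proposition \ref{Invariant_measure} over $\{x \in W^n_{\geq 0} : x_n \leq \eta\}$ by applying Lemma \ref{ibp_lemma} with one of the functions $f_i$ or $g_j$ suitably truncated at $\eta$, and then recognise the resulting determinant via the Weyl character formula \eqref{symplectic_schur_defn}; however, this duplicates effort already done through the supremum representation and seems strictly less efficient than the route above.
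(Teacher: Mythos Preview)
Your proposal is correct but takes a different route from the paper. The paper explicitly acknowledges, just before its proof, that one could appeal to Theorem \ref{top_particle_theorem} together with the Bisi--Zygouras formula \eqref{lpp_formula} (equivalently, \eqref{largest_particle_non_colliding_sup}) and be done; this is exactly what you do. Instead, the paper opts for the alternative you mention at the end and dismiss: it sums the explicit probability mass function from Proposition \ref{Invariant_measure} over $\{y \in W^n_{\geq 0} : y_n \leq \eta\}$. It does not use Lemma \ref{ibp_lemma} for this, but rather performs the summation sequentially in $y_n, y_{n-1}, \ldots, y_1$, at each stage removing a boundary term by a column operation, and then uses further column operations to recognise the resulting determinant as $\text{det}(v_i^{\eta+j} - v_i^{-(\eta+j)})$, i.e.\ the numerator of the symplectic Weyl character formula. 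What your route buys is brevity: the result is already contained in Section \ref{Harmonic_Functions}. What the paper's route buys is independence from the non-colliding walk machinery and from the external reference \cite{Bisi_Zygouras2}; it shows that the symplectic Schur formula emerges directly from the PushASEP invariant measure computed in Section \ref{push_asep}, giving a self-contained derivation within the framework of Sections \ref{push_asep}--\ref{point_to_line}.
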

For point-to-line last passage percolation this was proven in \cite{Bisi_Zygouras2} and related to earlier formulas 
for point-to-line last passage percolation in \cite{baik_rains} and \cite{bisi_zygouras}. We could appeal to this and Theorem \ref{top_particle_theorem}
to prove the same expression for the distribution function of $Y_n^*$. We now show that it follows
quickly from Proposition \ref{Invariant_measure}. 
\begin{proof}
From Proposition \ref{Invariant_measure} we have
\begin{equation*}
F(\eta) = c_n \sum_{y_1, \ldots, y_n \leq \eta, y \in W^n_{\geq 0}} \prod_{k=1}^n v_k^{y_k}
\text{det}(D^{(1/v_1 \ldots 1/v_j)} \phi_i(y_j + j - 1))_{i, j = 1}^n.  
\end{equation*}
We perform the summation in $y_n$ from $y_{n-1}$ to $\infty$ which replaces the last column by 
$v_n^{\eta} D^{(1/v_1 \ldots 1/v_{n-1})} \phi_i(\eta +n - 1) - v_n^{y_{n-1} -1} 
D^{(1/v_1 \ldots 1/v_{n-1})} \phi_i(y_{n-1} +n - 2)$. The second term differs from the penultimate column 
by a factor of $v_{n}^{y_{n-1} -1} v_{n-1}^{-y_{n-1}}$ which is non-zero and independent of $i$. Therefore the second term 
can be removed from the last column by column operations. We now apply this procedure inductively in order $x_{n-1}, \ldots, x_1$ to obtain 
\begin{equation*}
F(\eta) = c_n  \left(\prod_{k=1}^n v_k \right)^{\eta} \text{det}(D^{(1/v_2 \ldots 1/v_{n-1})} \phi_i(\eta + j - 1))_{i, j = 1}^n
\end{equation*}
where $D^{\emptyset} = \text{Id}$. 
We relate this to a symplectic Schur function by using column operations. 
The entries in the first column are $\phi_i(\eta) = v_i^{-(\eta + 1)} - v_i^{\eta + 1}$. In the second column, 
the entries are $D^{(1/v_2)} \phi_i(\eta + 1) = (v_{i}^{-(\eta + 2)} - v_i^{\eta + 2}) - v_2^{-1} (v_i^{-(\eta+1)}
 - v_i^{\eta + 1})$ and the second bracketed term can be removed by column operations. This can be continued inductively and leads to 
 \begin{IEEEeqnarray*}{rCl}
 F(\eta) & = & c_n (-1)^n \left(\prod_{k=1}^n v_k\right)^{\eta} \text{det}(v_i^{\eta + j} - v_i^{-(\eta + j)})_{i, j = 1}^n \\
 & = &  c_n \left(\prod_{k=1}^n v_k\right)^{\eta} Sp_{\eta^{(n)}}(v_1, \ldots, v_n) \text{det}(v_i^j - v_i^{-j})_{i, j = 1}^n.
 \end{IEEEeqnarray*}
The proof is now completed by using \eqref{fulton_harris_eqn} to equate the normalisation constants.
\end{proof}

\section{A dynamically reversible process}
 \label{dynamic_reversibility}

We will suppose throughout that $0 < v_1, \ldots, v_n < 1$. 
Let $e_{ij}$ denote the vector taking value $1$ in position $(i, j)$ and $0$ otherwise. Let $S = \{(i, j):  i, j \in \mathbb{Z}_{\geq 1}, i + j \leq n+1\}$ and 
\begin{equation*}
\mathcal{X} = \{ (x_{ij})_{(i, j ) \in S} : x_{ij} \in \mathbb{Z}_{\geq 0}, x_{i+1, j} \leq x_{ij} \text{ and } x_{i, j+1} \leq x_{ij} \}.
\end{equation*} 
We define a continuous-time Markov process $(X_{ij}(t) : i+j \leq n+1, t \geq 0)$ taking values in $\mathcal{X}$ by specifying its transition rates.
For $x, x + e_{ij} +  e_{i j-1} + \ldots + e_{ik} \in \mathcal{X}$, $(i, j), (i, k) \in S$ and $k \leq j$ define
\begin{equation}
\label{X_defn1}
q(x, x + e_{ij} + e_{i j-1} + \ldots + e_{ik}) = v_{n-j + 1} (v_{n-j + 1} v_{i-1})^{-1_{\{x_{ij} \geq x_{i-1, j+1}\}}} 1_{\{x_{ij} = x_{i j-1} = \ldots = x_{ik}\}}
\end{equation}
with the notation that $x_{0, j} = \infty$ for $j = 2, \ldots, n+1$.
For $x, x - e_{ij} -  e_{i+1 j} - \ldots - e_{lj} \in \mathcal{X}, (i, j), (l, j) \in S$ and $l \geq i$ define
\begin{equation}
\label{X_defn2}
q(x, x - e_{ij} -  e_{i+1 j} - \ldots - e_{lj}) = v_{n - j +1}^{-1} (v_{i-1} v_{n - j + 1})^{1_{\{x_{ij} > x_{i - 1, j+1}\}}} 1_{\{x_{ij} = x_{i+1 j} = \ldots = x_{lj}\}}.
\end{equation}
All other transition rates are zero. The fact that $q(x, x') \neq 0$ only 
if $x, x' \in \mathcal{X}$ corresponds to blocking interactions.
This defines a multi-dimensional Markov chain with interactions shown in Figure \ref{fig_Xarray}, where the arrows in Figure \ref{fig_Xarray} correspond to the following interactions. 
\begin{enumerate}[(i)]
\item A push-block interaction denoted $A \rightarrow B$. If $A = B$ and $A$ jumps to the right by one then $B$ also jumps to the right by one (this may then causes further right jumps if $B \rightarrow C$). If $A = B$ and $B$ jumps left then this jump is suppressed. 
\item A push-block interaction denoted $A \downarrow B$ where $A$ is at the base of the arrow and $B$ at the head of the arrow in Figure \ref{fig_Xarray}. If $A = B$ and $A$ jumps to the left by one then $B$ also jumps to the left by one (this may then causes further left jumps if $B \downarrow C$). If $A = B$ and $B$ jumps right then this jump is suppressed. 
 \item An interaction $A \leadsto B$ in which the rates of right and left jumps experienced by $B$ depend on its location relative to $A$. The particular form is given in \eqref{X_defn1} and \eqref{X_defn2} and is chosen such that
 $(X_{ij}(t):i+j \leq n+1, t \geq 0)$ is dynamically reversible, see part (ii) of Theorem \ref{dynamical_reversibility_theorem}. 
 \item An interaction with a wall in which all left jumps below zero are suppressed. This is depicted by the diagonal 
 line on the left side of Figure \ref{fig_Xarray}.
 \end{enumerate}

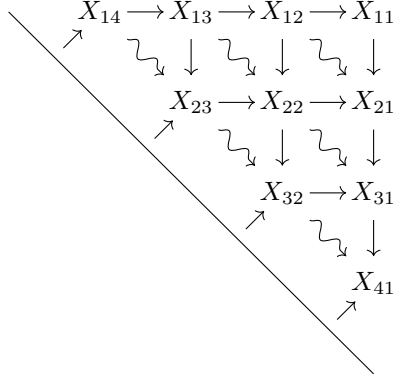
\begin{figure}
\centering
 \begin{tikzpicture}[scale = 1.2]
 \node at (4, 4) {$X_{11}$};
 \draw[->] (3.3, 4) -- (3.7, 4);
  \draw[->] (4, 3.7) -- (4, 3.3);
 \node at (3, 4) {$X_{12}$};
 \draw[->] (2.3, 4) -- (2.7, 4);
   \draw[->] (4, 2.7) -- (4, 2.3);
 \node at (2, 4) {$X_{13}$};
 \draw[->] (1.3, 4) -- (1.7, 4);
   \draw[->] (4, 1.7) -- (4, 1.3);
 \node at (1, 4) {$X_{14}$};
 \draw[->] (3.3, 3) -- (3.7, 3);
   \draw[->] (3, 3.7) -- (3, 3.3);
 \node at (4, 3) {$X_{21}$};
 \draw[->] (2.3, 3) -- (2.7, 3);
   \draw[->] (3, 2.7) -- (3, 2.3);
 \node at (3, 3) {$X_{22}$};
 \draw[->] (3.3, 2) -- (3.7, 2);
 \node at (2, 3) {$X_{23}$};
   \draw[->] (2, 3.7) -- (2, 3.3);
 \node at (4, 2) {$X_{31}$};
 \node at (3, 2) {$X_{32}$};
 \node at (4, 1) {$X_{41}$};
 \draw (0, 4) -- (4, 0);
 \path[draw = black, ->, snake it]    (3.3, 1.7) -- (3.7,1.3);
 \path[draw = black, ->, snake it]    (2.3, 2.7) --  (2.7,2.3);
 \path[draw = black, ->, snake it]    (1.3,3.7) -- (1.7, 3.3);
 \path[draw = black, ->, snake it]    (3.3,2.7) -- (3.7, 2.3);
 \path[draw = black, ->, snake it]    (3.3,3.7) -- (3.7, 3.3);
 \path[draw = black, ->, snake it]    (2.3,3.7) -- (2.7, 3.3);
  \draw[->] (0.6, 3.6) -- (0.8, 3.8);
  \draw[->] (1.6, 2.6) -- (1.8, 2.8);
   \draw[->] (2.6, 1.6) -- (2.8, 1.8);
    \draw[->] (3.6, 0.6) -- (3.8, 0.8);
 \end{tikzpicture}
\caption{The interactions in the system $\{X_{ij} : i + j \leq n+1\}$.}
\label{fig_Xarray}
\end{figure}

To find the invariant measure of $X$ we use a result which has found applications in the queueing theory literature.
\begin{lemma}[Theorem 1.13, Kelly \cite{kelly}]
\label{Kelly}
Let $X$ be a stationary Markov process with state space $E$ and transition rates $(q(j, k))_{i, j \in E}$. Suppose we can find positive sequences
$(\hat{q}(j, k))_{i, j \in E}$ and $(\pi(j))_{j \in E}$ with $\sum_{j \in E} \pi(j) = 1$ such that: 
\begin{enumerate}[(i)]
\item $q(j) = \hat{q}(j)$ where $q(j) = \sum_{k \neq j} q(j, k)$ and $\hat{q}(j) = \sum_{k \neq j} \hat{q}(j, k)$.
\item $\pi(j) q(j, k) = \pi(k) \hat{q}(k, j)$. 
\end{enumerate}
Then $\pi$ is the invariant measure for $X$ and $\hat{q}$ are the transition rates of the time reversal of $X$ in stationarity. 
\end{lemma}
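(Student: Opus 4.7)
The plan is to verify both conclusions directly from hypotheses (i) and (ii) by simple algebraic manipulation. The only nontrivial point is noticing which form of (ii) to sum and combining it with (i); this is essentially the standard derivation of the time reversal of a continuous-time Markov chain from its generator.

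First I would establish that $\pi$ is an invariant measure by checking the global balance equation $\pi(j) q(j) = \sum_{k \neq j} \pi(k) q(k, j)$ for each state $j \in E$. Condition (ii), applied with $j$ and $k$ interchanged, says $\pi(k) q(k, j) = \pi(j) \hat{q}(j, k)$; summing this over $k \neq j$ gives
\begin{equation*}
\sum_{k \neq j} \pi(k) q(k, j) \;=\; \pi(j) \sum_{k \neq j} \hat{q}(j, k) \;=\; \pi(j) \hat{q}(j) \;=\; \pi(j) q(j),
\end{equation*}
where the last equality uses condition (i). Since $\pi$ is a probability distribution by hypothesis, it is therefore the (unique, under irreducibility) invariant measure of $X$.

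Second, I would recall the standard fact that for a continuous-time Markov process run in stationarity, the time-reversed process $\tilde{X}(s) := X(T - s)$ is itself Markov with transition rates $\tilde{q}(j, k) = \pi(k) q(k, j)/\pi(j)$ for $k \neq j$ and total rates $\tilde{q}(j) = q(j)$. Rearranging (ii) yields $\hat{q}(k, j) = \pi(j) q(j, k)/\pi(k)$, so relabelling indices gives $\hat{q}(j, k) = \pi(k) q(k, j)/\pi(j) = \tilde{q}(j, k)$, while (i) provides the matching total rates. Hence $\hat{q}$ coincides with the transition rates of the time reversal of $X$ in stationarity, completing the proof.

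Since the argument is a direct algebraic verification, there is no substantial obstacle; the only point requiring care is that (ii) alone gives a detailed-balance-like identity between $q$ and $\hat{q}$, and one must combine it with the total-rate condition (i) to conclude global balance for $\pi$, rather than attempting to read off detailed balance for $q$ itself (which need not hold, as the process need not be reversible in the usual sense).
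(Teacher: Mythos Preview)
Your proof is correct and matches the paper's own argument: the paper simply sums (ii) and applies (i) to obtain the global balance equation $\sum_{j} \pi(j) q(j, k) = \pi(k) q(k)$, exactly as you do (with the roles of $j$ and $k$ swapped). Your additional verification that $\hat{q}$ agrees with the standard formula for the reversed rates is a correct elaboration that the paper omits.
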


The proof is straightforward:  using (ii) then (i)
\begin{equation*}
\sum_{j \in E} \pi(j) q(j, k) = \sum_{j \in E} \pi(k) \hat{q}(k, j) = \pi(k) \hat{q}(k) = \pi(k) q(k). 
\end{equation*}
Nonetheless this gives a convenient way of verifying an invariant measure \emph{if} we can guess 
the transition rates of the time reversed process. In general, this is an intractable problem. However, in this case we can 
make the choice that the invariant measure is a field of point-to-line last passage percolation times and the reversed transition
probabilities are given by \emph{reversing the direction of all interactions} between particles in Figure \ref{fig_Xarray}
and changing the order of the parameters $(v_1, \ldots, v_n) \rightarrow (v_n, \ldots, v_1)$ (the interactions 
with the wall remain unchanged). This is motivated by the construction of \cite{FW}.

More precisely, we define the reversed transition rates as follows.
For $x, x + e_{ij} + e_{i j -1} + \ldots + e_{ik} \in \mathcal{X}$ for $(i, j), (i, k) \in S$ and $k \leq i$ define
\begin{equation*}
\hat{q}( x + e_{ij} + e_{i j -1} + \ldots + e_{ik}, x) = v_i^{-1} (v_{n-k+2} v_i)^{1_{\{ x_{ik} \geq x_{i+1, k-1}\}}} 1_{\{x_{ij} = x_{i j - 1} = \ldots = x_{ik}\}}
\end{equation*} 
with the notation that $x_{j0} = \infty$ for $j = 2, \ldots, n+1$.
For $x, x -e_{ij} - e_{i+1, j} - \ldots - e_{lj} \in \mathcal{X}$, $(i, j), (l, j) \in S$ and $l \geq i$, define 
\begin{equation*}
\hat{q}(x -e_{ij} - e_{i+1, j} - \ldots - e_{lj}, x) = v_l (v_{n-j+2} v_l)^{-1_{\{x_{lj} > x_{l+1, j-1} \}}} 1_{\{x_{ij} = x_{i+1 j} = \ldots = x_{lj}\}}.
 \end{equation*}

Our proposed invariant measure is the probability mass function of $(G(i, j): i+j \leq n+1)$. This has an explicit form: 
\begin{equation*}
\pi(x) = \prod_{\{i + j < n+1\}} (1- v_i v_{n-j+1}) (v_i v_{n-j+1})^{x_{ij} - \max(x_{i+1 j}, x_{i j+1})} 
\prod_{i=1}^n (1 - v_i^2) v_i^{2 x_{i n-i+1}} 
\end{equation*}

\begin{theorem}
\label{dynamical_reversibility_theorem}
Suppose that $0 < v_1, \ldots, v_n <1$.
Let $(X_{ij}^{(v_1 \ldots v_n)}(t) : i + j \leq n+1, t \geq 0)$ be the continuous-time Markov process with transition rates given 
by \eqref{X_defn1} and \eqref{X_defn2}. This process has a unique invariant measure $(X_{ij}^*: i + j \leq n+1)$
which satisfies
\begin{equation*}
(X_{ij}^*: i + j \leq n+1) \stackrel{d}{=} (G(i, j) : i+j \leq n+1).
\end{equation*}
When run in stationarity, 
\begin{equation*}
(X_{ij}^{(v_1, \ldots, v_n)}(t))_{t \in \mathbb{R}, i+j \leq n+1} \stackrel{d}{=} (X_{ji}^{(v_n, \ldots, v_1)}(-t))_{t \in \mathbb{R}, i+j \leq n+1}. 
\end{equation*}
\end{theorem}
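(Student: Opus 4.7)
The plan is to apply Kelly's lemma (Lemma \ref{Kelly}) with the proposed invariant measure $\pi$ and the proposed reverse rates $\hat q$ stated above, and then read off the time-reversal identity as a direct consequence.

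First I would identify $\pi$ with the joint probability mass function of $(G(i,j) : i + j \leq n+1)$. By the recursion $G(i, j) = g_{ij} + \max(G(i+1, j), G(i, j+1))$ for $i + j \leq n$ together with the boundary case $G(i, n-i+1) = g_{i, n-i+1}$, and by independence of the geometric weights $g_{ij}$ with parameters $1 - v_i v_{n-j+1}$, the joint pmf of $(G(i,j))$ factorises exactly as $\pi(x)$. This shows simultaneously that $\pi$ is a probability measure on $\mathcal{X}$ and yields the distributional identity $(X^*_{ij}) \stackrel{d}{=} (G(i,j))$, once Kelly's lemma confirms that $\pi$ is invariant.

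Second I would verify the two Kelly conditions. For detailed balance there are two symmetric families to check: right-string jumps $\Delta = e_{ij} + \cdots + e_{ik}$ in a row, and left-string jumps in a column. For a right string, the constraint $x + \Delta \in \mathcal{X}$ forces $x_{i-1,l} > x_{ij}$ for each $l \in [k,j]$, and combined with column monotonicity this implies that the expressions $\max(x_{i,l}, x_{i-1, l+1})$ at interior positions $l \in [k, j-1]$ are unaffected by the jump. The nontrivial contributions to $\pi(x+\Delta)/\pi(x)$ come from just three positions, $(i,j)$, $(i-1,j)$, and $(i,k-1)$, and involve the events $\{x_{ij} \geq x_{i-1, j+1}\}$ and $\{x_{ik} \geq x_{i+1, k-1}\}$. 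These are exactly the indicators appearing in $q$ and $\hat q$, and a short computation gives $\hat q(x+\Delta, x)/q(x, x+\Delta) = \pi(x)/\pi(x+\Delta)$. The column case is symmetric by transposition, and the total-rate condition $q(x) = \hat q(x)$ is a similar block-by-block bookkeeping.

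Finally, the time-reversal identity is immediate from Kelly's lemma once one observes that the reverse rates $\hat q$ are exactly the forward rates of the process $(X^{(v_n, \ldots, v_1)}_{ji}(t))_{t \in \mathbb{R}, i+j \leq n+1}$: substituting $(i,j) \mapsto (j,i)$ and $v_k \mapsto v_{n-k+1}$ into the formula for $q$ produces $\hat q$, with the $\geq$ versus $>$ swap between the indicators in $q$ and $\hat q$ accounting for the unit shift between evaluating at $x$ versus $x+\Delta$. The main obstacle is the detailed-balance computation: one has to track how the various $\max$-expressions in $\pi$ shift under a string jump, and verify that the cases in which the indicators of $q$ and $\hat q$ are triggered align precisely with the cases in which those maxima actually change. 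Once this matching is established, the construction of the process is essentially dictated by the requirement that $\pi$ be its invariant measure.
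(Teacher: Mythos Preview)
Your proposal is correct and follows essentially the same approach as the paper: apply Kelly's lemma with the candidate $\pi$ and $\hat q$, verify detailed balance separately for row right-strings and column left-strings by tracking which $\max$-terms in $\pi$ actually shift (the paper isolates the same three boundary positions $(i,j)$, $(i-1,j)$, $(i,k-1)$ that you do), verify $q(x)=\hat q(x)$ by casework on which blocking constraints are active, and then read off the dynamical reversibility from the $(i,j)\mapsto(j,i)$, $v_k\mapsto v_{n-k+1}$ symmetry between $q$ and $\hat q$. Your additional remark that $\pi$ is visibly the law of $(G(i,j))$ via the LPP recursion and independence of the geometric weights is exactly how the paper introduces $\pi$ just before the theorem statement.
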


The process $(X_{ij}(t): i + j \leq n+1, t \geq 0)$ is irreducible, does not explode and 
has a unique invariant measure.
The second statement is the statement that when run in stationarity $(X_{ij}(t): i + j \leq n+1, t \geq 0)$ is dynamically reversible. 

\begin{proof}
We will use Lemma \ref{Kelly}.
We first prove that for all $x, x' \in \mathcal{X}$, 
\begin{equation}
\label{dynamical_rev_eq1}
\pi(x) q(x, x') = \pi(x') \hat{q}(x', x). 
\end{equation}

First consider the case when $x' = x + e_{ij} + e_{i j -1} + \ldots + e_{ik}$. Both sides are zero unless 
$x_{ij} = x_{i j-1} = \ldots = x_{ik}$. 
When these equalities hold, $\max(x_{i p}, x_{i-1, p+1}) = x_{i-1, p+1}$ for each $p = j-1, \ldots, k$ and $\max(x_{i+1, p-1}, x_{i p}) = x_{i p}$ for each $p = j, \ldots, k+1$. 
Therefore when $x_{ij} = x_{i j-1} = \ldots = x_{ik}$, 
\begin{equation*}
\pi(x) = \bar{\pi}_1 (v_i v_{n-j+1})^{x_{ij}} (v_{i-1} v_{n-j+1})^{-x_{ij} 1_{\{x_{ij} \geq x_{i-1, j+1}\}}} (v_i v_{n-k+2})^{-x_{ik} 1_{\{ x_{ik} \geq x_{i+1, k-1}\}}}
\end{equation*}
where $\bar{\pi}_1$ does not depend on $x_{ij}, x_{i j-1}, \ldots, x_{ik}$. 
In particular, with $x' = x + e_{ij} + e_{i j -1} + \ldots + e_{ik}$, 
\begin{equation*}
\frac{\pi(x')}{\pi(x)} = v_i v_{n-j+1} (v_{i-1} v_{n-j+1})^{-1_{\{x_{ij} \geq x_{i-1, j+1} \}}} (v_i v_{n-k+2})^{-1_{\{ x_{ik} \geq x_{i+1, k-1}\}}}.
\end{equation*}
We compare to the ratio 
\begin{equation*}
\frac{q(x, x')}{\hat{q}(x', x)} = 
v_i v_{n-j+1} (v_{i-1} v_{n-j+1})^{-1_{\{x_{ij} \geq x_{i-1, j+1} \}}} (v_i v_{n-k+2})^{-1_{\{ x_{ik} \geq x_{i+1, k-1}\}}}.
\end{equation*}
Combining the above two equations proves \eqref{dynamical_rev_eq1} in the case  $x' = x + e_{ij} + e_{i j -1} + \ldots + e_{ik}$.
The second case is when $x' = x -e_{ij} - e_{i+1, j} - \ldots - e_{lj}$ and proceeds in a similar manner. 
Both sides are zero unless $x_{ij} = x_{i+1 j} = \ldots = x_{lj}$. When these equalities hold, then $\max(x_{p-1 j}, x_{p, j-1}) = x_{p, j-1}$ for $p = l, \ldots, i+1$ and $\max(x_{p-1, j+1}, x_{p, j}) = x_{p j}$ for $p = l, \ldots, i+1$. 
Therefore 
\begin{equation*}
\pi(x) = \bar{\pi}_2 (v_l v_{n-j+1})^{x_{lj}} (v_l v_{n-j+2})^{-x_{lj} 1_{\{x_{lj} > x_{l+1, j-1}\}}} 
(v_{i-1} v_{n-j+1})^{-x_{ij} 1_{\{x_{ij} > x_{i-1, j+1}\}}}
\end{equation*}
where $\bar{\pi}_2$ does not depend on $x_{ij}, x_{i+1, j}, \ldots, x_{lj}$. 
Therefore letting
$x' = x-e_{ij} - e_{i+1, j} - \ldots - e_{lj}$ we have
\begin{equation*}
\frac{\pi(x)}{\pi(x')} = (v_l v_{n-j +1})(v_l v_{n-j+2})^{-1_{\{x_{lj} > x_{l+1, j-1} \}}} (v_{i-1} v_{n- j +1})^{-1_{ \{ x_{ij} > x_{i-1, j+1} \}}}
\end{equation*}
and 
\begin{equation*}
\frac{\hat{q}(x', x)}{q(x, x')}
= (v_l v_{n-j +1})(v_l v_{n-j+2})^{-1_{\{x_{lj} > x_{l+1, j-1} \}}} (v_{i-1} v_{n-j +1})^{-1_{ \{ x_{ij} > x_{i-1, j+1} \}}}.
\end{equation*}
 The two above equations prove \eqref{dynamical_rev_eq1} in the case when $x' = x -e_{ij} - e_{i+1, j} - \ldots - e_{lj}$. 
 Both sides of \eqref{dynamical_rev_eq1} are zero in all other cases and so we have proven \eqref{dynamical_rev_eq1}.

We now show that for all $x \in \mathcal{X}$ we have $q(x) = \hat{q}(x)$. 
This follows from comparing,
\begin{multline}
\label{q_rate}
q(x) = v_1 + v_1^{-1} 1_{\{x_{1n} > 0\}} + \sum_{k=1}^{n-1} v_{n-k+1} + v_{n-k+1}^{-1} 1_{\{x_{1 k} > x_{1 k+1}\}} \\
+ \sum_{\{i \neq 1, i+j \leq n+1\}} v_{n-j+1} (v_{n-j+1} v_{i-1})^{-1_{\{x_{ij} \geq x_{i-1, j+1}\} }} 1_{\{x_{ij} < x_{i-1, j}\}} \\ +
 \sum_{\{i \neq 1, i+j < n+1\}} v_{n-j+1}^{-1} (v_{n-j+1} v_{i-1})^{1_{\{x_{ij} > x_{i-1, j+1} \}}} 1_{\{x_{ij} > x_{i j+1}\}} \\
 +  \sum_{\{i \neq 1, i+j = n+1\}} v_{n-j+1}^{-1} (v_{n-j+1} v_{i-1})^{1_{\{x_{ij} > x_{i-1, j+1} \}}} 1_{\{x_{ij} > 0\}}
\end{multline}
and
\begin{multline}
\label{q_hat_rate}
\hat{q}(x) = \sum_{k=1}^{n-1} v_k + v_k^{-1} 1_{\{x_{k1} > x_{k+1, 1}\}} + v_n + v_n^{-1} 1_{\{x_{n1} > 0 \}} \\
+ \sum_{\{i \neq 1, i+j \leq n+1\}} 
v_{i-1}(v_{i-1} v_{n-j+1})^{-1_{\{x_{i-1, j+1} \geq x_{ij}\}}} 1_{\{x_{i-1, j+1} < x_{i-1, j}\}} \\ +  \sum_{\{i \neq 1, i+j < n+1\}} 
v_{i-1}^{-1}(v_{i-1} v_{n-j+1})^{1_{\{x_{i-1, j+1} > x_{ij}\}}} 1_{\{x_{i-1, j+1} > x_{i, j+1}\}}
\\ +\sum_{\{i \neq 1, i+j = n+1\}} 
v_{i-1}^{-1}(v_{i-1} v_{n-j+1})^{1_{\{x_{i-1, j+1} > x_{ij}\}}} 1_{\{x_{i-1, j+1} > 0\}}.
\end{multline}
One way to check that $q(x) = \hat{q}(x)$ is to check the equality first in the case when the inequalities 
$x_{ij} < x_{i-1, j}$ for each $i \neq 1$, $x_{ij} < x_{i j-1}$  
for each $j \neq 1$ and $x_{i n-i+1} > 0$ hold for each $i = 1, \ldots, n$. This case can be seen directly from \eqref{q_rate} and \eqref{q_hat_rate}. We now consider the rates of jumps which are suppressed in each case when these inequalities no longer hold:
\begin{enumerate}[(i)]
\item If $x_{i, n-i+1} = 0$ then both forwards and backwards in time a jump of rate $v_{i}^{-1}$ is suppressed by the wall.
\item If $x_{ij} = x_{i, j-1}$ then forwards in time the left jump of the $(i, j-1)$ particle is suppressed and the suppressed jump has rate 
$v_{n-j+2}^{-1}$ because $x_{i, j - 1} = x_{ij} \leq x_{i-1, j}$. 
Backwards in time, the right jump of the $(i, j)$ particle is suppressed and the suppressed jump has rate $v_{n-j+2}^{-1}$ 
because $x_{i, j} = x_{i, j-1} \geq x_{i+1, j-1}$.
\item If $x_{ij} = x_{i-1 j}$ then forwards in time the right jump of the $(i, j)$ particle is suppressed and the suppressed jump has rate 
$v_{i-1}^{-1}$ because $x_{ij} = x_{i-1, j} \geq x_{i-1, j+1}$. 
Backwards in time, the left jump of the $(i-1, j)$ particle is suppressed and the suppressed jump has rate $v_{i-1}^{-1}$ 
because $x_{i-1 j} = x_{ij} \leq x_{i j-1}$. 
\end{enumerate}
Using Lemma \ref{Kelly}, we have now established that $\pi$ is the invariant measure and $\hat{q}$ are the reversed 
transition rates in stationarity of $(X_{ij}(t):i+j \leq n+1, t \geq 0)$.  The second statement in the Theorem follows from comparing $q$ and $\hat{q}$ and observing that they are identical after 
the swap $x_{ij} \rightarrow x_{ji}$ and $(v_1, \ldots, v_n) \rightarrow (v_n, \ldots, v_1)$.  
\end{proof}

We end by discussing two further properties of the the process $X$. These properties can both be proved by running the process $(X_{ij}(t) : i + j \leq n+1, t \geq 0)$ in stationarity, forwards and backwards in time, and follow in exactly the same way as Section 5 of \cite{FW} as they depend on the structural properties of the $X$ array rather than the exact dynamics. 
\begin{enumerate}[(i)]
\item The marginal distribution of any row $(X_{i, n-i+1}, \ldots, X_{i, 1})$ run forwards in time is PushASEP with a wall
with rate vector $(v_i, \ldots, v_n)$. The marginal distribution of any column $(X_{n-j+1, j}, \ldots, X_{1, j})$ run backwards in time is PushASEP with 
a wall with rate vector $(v_{n-j+1}, \ldots, v_1)$ 
\item Let $Q^n_t$ denote the transition semigroup for PushASEP with a wall with $n$ particles.
Let $P_{n-1 \rightarrow n}$ denote the transition kernel for the update of the Markov chain $\mathbf{G}^{\text{pl}}$ defined in 
Section \ref{point_to_line} from time $n-1$ to $n$. There is an intertwining
between $Q^{n-1}_t$ and $Q^n_t$ with intertwining kernel given by $P_{n-1 \rightarrow n}$. 
In operator notation, 
\begin{equation*}
Q_t^{n-1} P_{n-1 \rightarrow n} = P_{n-1 \rightarrow n} Q_t^n.
\end{equation*}
\end{enumerate}

\section{Push-block dynamics and Proposition \ref{two_sided_intertwining}}
\label{Intertwining}

The aim of this Section is to describe how Proposition \ref{two_sided_intertwining}
can be obtained by a construction of an interacting particle system with pushing and blocking interactions.  This section is adapting the proof of Theorem 2.1 in \cite{warren_windridge} with a
different intertwining \eqref{intertwining} replacing Equation 3.3 from \cite{warren_windridge}.

We follow the set-up and notation of \cite{warren_windridge}.
For each $n \geq 1$, let $(\mathfrak{X}(t) : t \geq 0)$ be a continuous-time Markov process 
$\mathfrak{X}(t) = (\mathfrak{X}^j_i(t))_{1\leq i \leq j \leq n}$ taking values in
$\mathbb{K}_n = \{(x^j_i)_{1 \leq i \leq j \leq n} \text{ with } x^{j+1}_i \leq x^j_i \leq x^{j+1}_{i+1}\}.$
We use $x^j$ to denote the vector $x^j = (x^j_1, \ldots, x^j_j)$ and describe $\mathfrak{X}^j$ 
as the positions of the particles in the $j$-th level of $\mathfrak{X}$. Let $v_i > 0$ for each $i \geq 1$. 

The dynamics of $\mathfrak{X}$ is governed by $n(n+1)$ independent exponential clocks, where each particle
in the $j$-th level has two independent exponential clocks with rates $v_j$ and $v_{j}^{-1}$ corresponding to its right and left jumps  
respectively. When the clock of a particle in the $j$-th level rings, that particle attempts to jump to the right or left but 
will experience a pushing and a blocking interaction which ensures that $\mathfrak{X}$ remains within $\mathbb{K}_n$. In summary, a particle at the $j$-th level \emph{pushes} particles at levels $k > j$ and is \emph{blocked}
by particles at levels $k < j$.
More precisely, suppose the right clock of $\mathfrak{X}^j_i$ rings. 
\begin{enumerate}[(i)]
\item If $\mathfrak{X}^j_i = \mathfrak{X}^{j-1}_i$ then the right jump is suppressed. 
\item If $\mathfrak{X}^j_i < \mathfrak{X}^{j-1}_i$ 
and $\mathfrak{X}^j_i = \mathfrak{X}^{j+1}_{i+1}$ then $\mathfrak{X}^j_i$ jumps right by one and \emph{pushes} 
$\mathfrak{X}^{j+1}_{i+1}$ to the right by one. The right jump of $\mathfrak{X}^{j+1}_{i+1}$ may then cause further right jumps in the same way. 
\item In all other cases $\mathfrak{X}^j_i$ jumps to the right by one and all other 
particles are unchanged. 
\end{enumerate}
If the left clock of $\mathfrak{X}^j_i$ rings then we have the same trichotomy of cases: (i) if $\mathfrak{X}^j_i = \mathfrak{X}^{j-1}_{i-1}$ then the left
jump of $\mathfrak{X}^j_i$ is suppressed ; (ii) if $\mathfrak{X}^j_i > \mathfrak{X}^{j-1}_{i-1}$ and  $\mathfrak{X}^j_i = \mathfrak{X}^{j+1}_i$ then $\mathfrak{X}^j_i$ jumps to the left and \emph{pushes} $\mathfrak{X}^{j+1}_i$ to the left by one which may then push further particles to the left; and (iii) in all other cases $\mathfrak{X}^j_i$ jumps to the left 
by one and all other 
particles are unchanged.

Let $n \geq 1$ and for $x \in \mathbb{K}_n$ let $w_v(x) = \prod_{i=1}^n v_i^{\lvert x^i \rvert - \lvert x^{i-1} \rvert}$ where $\lvert x \rvert = \sum_{j=1}^d x_j$
for $x \in \mathbb{R}^d$ and $\lvert x^0 \rvert = 0$.
For any $z  \in W^n$ we define $\mathbb{K}_n(z) = \{(x^j_i)_{1 \leq i \leq j \leq n}  \in K_n : x^n = z\}$ and a probability measure on $\mathbb{K}_n(z)$
by $M_z(x) = w_v(x)/S_z(v)$ for all $x \in \mathbb{K}_n(z)$.

\begin{proposition}
\label{GT_process}
Suppose $z \in W^n$ and that $(\mathfrak{X}(t): t \geq 0)$ has initial distribution $M_z(\cdot)$.  
Then $(\mathfrak{X}^n(t) : t \geq 0)$ is a Markov process with conservative $Q$-matrix, given for $x \in W^n$ 
by
\begin{equation*}
Q(x, x \pm e_i) = \frac{S_{x \pm e_i}(v)}{S_x(v)} 1_{\{x \pm e_i \in W^n\}}, \text{ for } i = 1, \ldots, n
\end{equation*}
with $Q(x, x) = -\sum_{i=1}^n (v_i^{-1} + v_i)$.
\end{proposition}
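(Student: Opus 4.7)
The plan is to apply the Markov functions / Rogers--Pitman intertwining criterion as in the proof of Theorem 2.1 of \cite{warren_windridge}, with the modifications needed to accommodate both right and left jumps. Let $L$ denote the generator of $\mathfrak{X}$ on $\mathbb{K}_n$, let $\pi(x) = x^n$ be the projection to the top level, and let $\Lambda$ be the Markov kernel from $W^n$ to $\mathbb{K}_n$ defined by $\Lambda(z, \cdot) = M_z(\cdot)$, which is supported on $\pi^{-1}(z) = \mathbb{K}_n(z)$. The aim is to verify the generator-level intertwining
\begin{equation*}
\Lambda Q = L \Lambda.
\end{equation*}
Once this is done, the Rogers--Pitman theorem implies that whenever $\mathfrak{X}(0) \sim M_z$ the process $(\mathfrak{X}^n(t))_{t \geq 0}$ is Markov with $Q$-matrix $Q$, which is precisely the desired conclusion.

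First I would check that $M_z$ is a probability measure; this is the combinatorial identity $\sum_{x \in \mathbb{K}_n(z)} w_v(x) = S_z(v)$, which expresses the Schur function as a generating function over Gelfand--Tsetlin patterns. Next I would rewrite the intertwining as the pointwise requirement
\begin{equation*}
\sum_{x \in \mathbb{K}_n(z)} M_z(x)\, L(x, x') = Q(z, \pi(x'))\, M_{\pi(x')}(x') \quad \text{for all } x' \in \mathbb{K}_n,
\end{equation*}
and enumerate the transitions contributing to the left-hand side by the type of clock (right or left), the level $j$ at which it rings, and whether the resulting motion triggers a cascade of pushes down to the top level. For each such transition type there is essentially a unique pre-image $x \in \mathbb{K}_n(z)$ of $x'$, so the sum collapses to a single term weighted by a ratio of $w_v$ values and a clock rate.

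The main obstacle is the local combinatorial identity matching this single term with the right-hand side. For right jumps the required identity is essentially the content of Theorem 2.1 of \cite{warren_windridge}: it reduces to a two-level branching identity which rewrites the Schur ratio $S_{z+e_i}(v)/S_z(v)$ in terms of $w_v$ ratios at the intermediate levels traversed by the downward push cascade. The novelty here is establishing the corresponding identity for left jumps, which is not covered in \cite{warren_windridge}; I expect this to follow by an almost identical computation, exploiting that $w_v$ and the determinantal formula \eqref{schur_defn} for $S_z(v)$ have analogous behaviour under positive and negative unit shifts of a coordinate. Granted both identities, summing the intertwining over all $x'$ yields the Schur identity
\begin{equation*}
\sum_{i=1}^n \bigl(S_{z+e_i}(v)\mathbf{1}_{\{z+e_i \in W^n\}} + S_{z-e_i}(v)\mathbf{1}_{\{z-e_i \in W^n\}}\bigr) = S_z(v) \sum_{i=1}^n (v_i + v_i^{-1}),
\end{equation*}
which implies in particular that $Q$ is conservative and completes the proof.
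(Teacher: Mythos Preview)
Your high-level strategy (verify a Rogers--Pitman intertwining and deduce the Markov property of the top row) is the same as the paper's, but the organisation differs in an important way. The paper does not attempt the full-pattern intertwining $Q\Lambda=\Lambda L$ with $\Lambda(z,\cdot)=M_z(\cdot)$ on $\mathbb{K}_n$. Instead it argues by induction on $n$: assuming the result for $n$ levels, the bottom two rows $(X,Y)=(\mathfrak{X}^n,\mathfrak{X}^{n+1})$ form a Markov process on $W^{n,n+1}$ with an explicit generator $\mathcal{A}$, and one only has to check the \emph{two-level} intertwining $Q_Y\Lambda=\Lambda\mathcal{A}$ with $\Lambda(y,(x',y'))=m(x',y')\,1_{\{y=y'\}}$ and $m(x,y)=v_{n+1}^{|y|-|x|}S_x(v)/S_y(v)$. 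That check is a short case analysis (no push, push, and the diagonal), exactly parallel to Equation~3.3 of \cite{warren_windridge}; conservativity of $Q_Y$ is carried along as part of the induction.

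Your direct route is correct in principle, but the description understates the work. For a target $x'$ with $\pi(x')=z\pm e_i$, the pre-images $x\in\mathbb{K}_n(z)$ are not a single pattern: there is one for every level $j$ at which the cascade could have originated, so the off-diagonal identity is a telescoping sum over $j$ rather than ``a single term''. For the diagonal case $\pi(x')=z$ you must also match all internal moves (those not reaching level $n$) against $Q(z,z)\,M_z(x')$; doing this cleanly essentially forces you back into the level-by-level induction. Two small slips: the intertwining should read $Q\Lambda=\Lambda L$ (your pointwise identity is the correct one), and the push cascades go from lower levels to the top row, not ``downward''. The paper's inductive packaging avoids these bookkeeping issues and is what \cite{warren_windridge} actually does; your last displayed Schur identity is then a consequence of the two-level step rather than something to be established separately.
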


We prove this Proposition inductively in $n$ by analysing the two consecutive bottom layers of $\mathfrak{X}$
and include the statement that $Q$ is conservative as part of the induction argument.
For $x \in W^n$ and $y \in W^{n+1}$ we will write $x \preceq y$ to mean that 
$y_1 \leq x_1 \leq y_2 \leq \ldots x_n \leq y_{n+1}$ and define
$W^{n, n+1} = \{x \in W^n, y \in W^{n+1}: x \preceq y\}$. 
By the inductive hypothesis, the marginal distribution of the two consecutive bottom layers of $\mathfrak{X}$ is a continuous-time 
Markov process $(X(t), Y(t))_{t \geq 0}= (X_1(t), \ldots, X_n(t), Y_1(t), \ldots, Y_{n+1}(t))_{t \geq 0}$ taking values in $W^{n, n+1}$
and with $Q$-matrix given by the off-diagonal entries: for $(x, y), (x', y') \in W^{n, n+1}$, 
\begin{equation*}
\mathcal{A}((x, y), (x', y')) = \begin{cases}
Q_X(x, x + e_i) & \text{ if } (x', y') = (x + e_i, y) \text{ and } x_i < y_{i+1}, \\
Q_X(x, x + e_i) & \text{ if } (x', y') = (x + e_i, y + e_{i+1}) \text{ and } x_i = y_{i+1}, \\
Q_X(x, x - e_i) & \text { if } (x', y') = (x - e_i, y) \text{ and } x_i > y_i, \\
Q_X(x, x - e_i) & \text{ if } (x', y') = (x - e_i, y - e_i) \text{ and } x_i = y_i, \\
v_{n+1}^{\pm} & \text{ if } (x', y') = (x, y \pm e_i)
\end{cases}
\end{equation*}
with $Q_X$ given by the $Q$-matrix from Proposition \ref{GT_process}.
All other off-diagonal entries are zero and 
the diagonal entries $\mathcal{A}((x', y'), (x', y'))$ equal the negative of
\begin{equation}
\label{diagonal_entries}
 \sum_{i=1}^n v_i + \sum_{i=1}^n v_i^{-1} + \sum_{i=1}^n v_{n+1} 1_{\{y_i' < x_i'\}}
+ \sum_{i=1}^n v_{n+1}^{-1} 1_{\{y_{i+1}' > x_i'\}} + v_{n+1} + v_{n+1}^{-1}.
\end{equation}
The inductive hypothesis that $Q_X$ is conservative means that $\mathcal{A}$ is conservative.
We define the function \begin{equation*}
m(x, y) = v_{n+1}^{\lvert y \rvert - \lvert x \rvert} \frac{S_x(v)}{S_y(v)}, \text{ for } (x, y) \in W^{n, n+1} 
\end{equation*}
and an intertwining kernel given by $\Lambda : W^{n+1} \rightarrow W^{n, n+1}$
\begin{equation*}
\Lambda(y, (x', y')))  = m(x', y') 1_{\{y = y'\}}.
\end{equation*}
The key step in proving Proposition \ref{GT_process} is to prove the intertwining 
$Q_Y \Lambda = \Lambda \mathcal{A}$ where $Q_Y$ is the desired $Q$-matrix from 
Proposition \ref{GT_process} with $n+1$ particles. This is equivalent to the statement that
\begin{equation}
\label{intertwining}
Q_{Y}(y, y') = \sum_{x \leq y} \frac{m(x, y)}{m(x', y')} \mathcal{A}((x, y), (x', y')), \text{ for all }y, y' \in W^n
\end{equation}
Once this is established it follows from general theory \cite{rogers1981} that $Y$ is an autonomous
Markov process 
with the desired $Q$-matrix and this $Q$-matrix is conservative; therefore Proposition \ref{GT_process} follows inductively. 
For a more detailed argument we refer to \cite{warren_windridge}: we are replacing Equation 3.3 from \cite{warren_windridge} 
with equation \eqref{intertwining} and the rest of the argument is unchanged. 
It remains to show \eqref{intertwining}.

We first show \eqref{intertwining} in the case $y' = y$. The right hand side of \eqref{intertwining} equals
\begin{equation*}
\sum_{x \preceq y} v_{n+1}^{\lvert x' \rvert - \lvert x \rvert} \frac{S_x(v)}{S_{x'}(v)} \mathcal{A}((x, y'), (x', y'))
\end{equation*}
and $\mathcal{A}((x, y'), (x', y'))$ can be non-zero if $x = x'$ or $x = x' \pm e_i$. 
If $x = x'$ then $\mathcal{A}((x', y'), (x', y'))$ equals the negative of \eqref{diagonal_entries}.
If $x = x' - e_i$ then $\mathcal{A}((x, y'), (x', y')) = Q_X(x'-e_i, x') 1_{\{y_{i}' < x_{i}'\}}$ and
if $x = x' + e_i$ then $\mathcal{A}((x, y'), (x', y')) = Q_X(x'+e_i, x') 1_{\{y_{i+1}' > x_{i}'\}}$. 
Using these expressions we find that the right hand side of \eqref{intertwining}
is equal to 
\begin{multline*}
-\left(\sum_{i=1}^n v_i + \sum_{i=1}^n v_i^{-1} + \sum_{i=1}^n v_{n+1} 1_{\{ y_i' < x_i' \}} 
+ \sum_{i=1}^n v_{n+1}^{-1} 1_{\{y_{i+1}' > x_i'\}} + v_{n+1} + v_{n+1}^{-1} \right)\\
+ \sum_{i=1}^n v_{n+1} \frac{S_{x' - e_i}(v)}{S_{x'}(v)} \frac{S_{x'}(v)}{S_{x' - e_i}(v)} 1_{\{y_i' < x_i'\}}
+ \sum_{i=1}^n v_{n+1}^{-1} \frac{S_{x' + e_i}(v)}{S_{x'}(v)} \frac{S_{x'}(v)}{S_{x' + e_i}(v)} 1_{\{y_{i+1}' > x_{i}'\}}
\end{multline*}
This equals $-\sum_{i=1}^{n+1} v_i - \sum_{i=1}^{n+1} v_i^{-1} = Q_Y(y', y')$ and proves \eqref{intertwining} for 
$y' = y$.

Suppose that $y' = y+e_i$ and consider two cases: depending on whether or not there was a pushing interaction. 
If $i = 1$ or $ i > 1$ and $x_{i-1}' < y_i'$, the right hand side of \eqref{intertwining} equals
\begin{equation*}
\frac{v_{n+1}^{\lvert y' - e_i \rvert - \lvert x' \rvert}}{v_{n+1}^{\lvert y' \rvert - \lvert x' \rvert}} 
\frac{S_{x'}(v)}{S_{y' - e_i}(v)} \frac{S_{y'}(v)}{S_{x'}(v)} v_{n+1} = Q_Y(y' - e_i, y').
\end{equation*}
The second case is if $i > 1$ and $x_{i-1}' = y_i'$, when the right hand side of \eqref{intertwining} equals
\begin{equation*}
\frac{v_{n+1}^{\lvert y' - e_i \rvert - \lvert x' - e_i\rvert}}{v_{n+1}^{\lvert y' \rvert - \lvert x' \rvert}} \frac{S_{x'-e_i}(v)}{S_{y' - e_i}(v)}
\frac{S_{y'}(v)}{S_{x'}(v)}
\frac{S_{x'}(v)}{S_{x' - e_i}(v)} = Q_Y(y' - e_i, y').
\end{equation*}
Finally suppose that $y' = y - e_i$ and split again into two cases. If $i = n$ or $i < n $ and $y_i' < x_i'$ then the right hand side of \eqref{intertwining} equals
\begin{equation*}
\frac{v_{n+1}^{\lvert y' + e_i \rvert - \lvert x' \rvert}}{v_{n+1}^{\lvert y' \rvert - \lvert x' \rvert}} 
\frac{S_{x'}(v)}{S_{y' + e_i}(v)} \frac{S_{y'}(v)}{S_{x'}(v)} v_{n+1}^{-1} = Q_Y(y'+e_i, y') 
\end{equation*} 
If $i < n$ and $y_i' = x_i'$ 
then the right hand side 
of \eqref{intertwining} equals
\begin{equation*}
\frac{v_{n+1}^{\lvert y' + e_i \rvert - \lvert x' + e_i\rvert}}{v_{n+1}^{\lvert y' \rvert - \lvert x' \rvert}} \frac{S_{x' + e_i}(v)}{S_{y' + e_i}(v)}
\frac{S_{y'}(v)}{S_{x'}(v)}
\frac{S_{x'}(v)}{S_{x' + e_i}(v)} = Q_Y(y' + e_i, y').
\end{equation*}
This completes the proof of \eqref{intertwining} and as described above completes the proof of Proposition \ref{GT_process}. 

\begin{proof}[Proof of Proposition \ref{two_sided_intertwining}]
We construct the process $\mathfrak{X}$ in 
Proposition \ref{GT_process} started from the origin with the rate of right and left jump rates on level $j$ given by $v_{n-j+1}$  
and $v_{n-j+1}^{-1}$ respectively.
Part (i) is an immediate consequence. 
Part (ii) 
follows from the fact that $(\mathfrak{X}^n_n(t))_{t \geq 0}$ is the 
largest particle in two different Markov processes $(\mathfrak{X}^n_1(t), \mathfrak{X}^n_2(t), \ldots, \mathfrak{X}^n_n(t))_{t \geq 0}$ which has $Q$-matrix given in Proposition \ref{GT_process}
and $(\mathfrak{X}^1_1(t), \mathfrak{X}^2_2(t), \ldots, \mathfrak{X}^n_n(t))_{t \geq 0}$ which
is PushASEP (without a wall) where the $i$-th particle has right jump rate $v_{n-i+1}$ and left jump rate $v_{n-i+1}$.  
The top particle of PushASEP (without a wall) is equal in distribution as a process to the right hand side of Proposition \ref{two_sided_intertwining}
by the argument used to prove equation \eqref{semi_discrete_lpp}.
\end{proof}

\paragraph{Acknowledgements.} I am very grateful to Jon Warren for helpful and stimulating discussions and to Neil O'Connell 
for suggesting the approach in Section 2.1. I am grateful for the financial support of the Royal Society Enhancement Award `Log-correlated Gaussian fields and symmetry classes in random matrix theory RGF\textbackslash EA\textbackslash 181085.'


\begin{thebibliography}{10}

\bibitem{baik_deift_johansson}
J.~Baik, P.A. Deift, and K.~Johansson.
\newblock On the distribution of the length of the longest increasing
  subsequence of random permutations.
\newblock {\em J. Amer. Math. Soc.}, 12:1119–1178, 1999.

\bibitem{baik_rains}
J. Baik and E.~M. Rains.
\newblock Algebraic aspects of increasing subsequences.
\newblock {\em Duke Math. J.}, 109(1):1--65, 07 2001.

\bibitem{baryshnikov}
Y.~M. Baryshnikov.
\newblock {GUE}s and queues.
\newblock {\em Probab. Theory Related Fields}, 119(2):256--274, 2001.

\bibitem{biane2005}
P. Biane, P. Bougerol, and N. O'Connell.
\newblock Littelmann paths and {B}rownian paths.
\newblock {\em Duke Math. J.}, 130(1):127--167, 10 2005.

\bibitem{Bisi_Zygouras2}
E. Bisi and N. Zygouras.
\newblock Transition between characters of classical groups, decomposition of
  {G}elfand-{T}setlin patterns and last passage percolation.
\newblock ArXiv: 1905.09756.

\bibitem{bisi_zygouras}
E. Bisi and N. Zygouras.
\newblock Point-to-line polymers and orthogonal {W}hittaker functions.
\newblock {\em Transactions of the American Mathematical Society},
  371:8339--8379, 2017.

\bibitem{borodin_corwin_macdonald}
A. Borodin and I. Corwin.
\newblock Macdonald processes.
\newblock {\em Probability Theory and Related Fields}, 158(1):225--400, 2014.

\bibitem{borodin2008}
A. Borodin and P. Ferrari.
\newblock Large time asymptotics of growth models on space-like paths {I}:
  {P}ush{ASEP}.
\newblock {\em Electron. J. Probab.}, 13:1380--1418, 2008.

\bibitem{five_author}
A. Borodin, P. Ferrari, M. Prähofer, T. Sasamoto, and J.
  Warren.
\newblock Maximum of {D}yson {B}rownian motion and non-colliding systems with a
  boundary.
\newblock {\em Electron. Commun. Probab.}, 14:486--494, 2009.

\bibitem{bfps}
A. Borodin, P.~L. Ferrari, M. Prähofer, and T. Sasamoto.
\newblock Fluctuation properties of the {TASEP} with periodic initial
  configuration.
\newblock {\em J Stat Phys}, 129:1055–1080, 2007.

\bibitem{clifford_sudbury}
P. Clifford and A. Sudbury.
\newblock A sample path proof of the duality for stochastically monotone
  {M}arkov processes.
\newblock {\em Ann. Probab.}, 13(2):558--565, 05 1985.

\bibitem{cosz}
I. Corwin, N. O’Connell, T. Seppäläinen, and N. Zygouras.
\newblock Tropical combinatorics and {W}hittaker functions.
\newblock {\em Duke Math. J.}, 163(3):513--563, 02 2014.

\bibitem{cox_rosler}
J.Theodore Cox and U. Rösler.
\newblock A duality relation for entrance and exit laws for {M}arkov processes.
\newblock {\em Stochastic Processes and their Applications}, 16(2):141 -- 156,
  1984.

\bibitem{dieker2008}
A.~B. Dieker and J.~Warren.
\newblock Determinantal transition kernels for some interacting particles on
  the line.
\newblock {\em Ann. Inst. H. Poincaré Probab. Statist.}, 44(6):1162--1172, 12
  2008.

\bibitem{ferrari}
P.L. Ferrari.
\newblock Polynuclear growth on a flat substrate and edge scaling of {GOE}
  eigenvalues.
\newblock {\em Comm. Math. Phys.}, 252:77–109, 2004.

\bibitem{FW}
W. FitzGerald and J. Warren.
\newblock Point-to-line last passage percolation and the invariant measure of a
  system of reflecting {B}rownian motions.
\newblock {\em Probability Theory and Related Fields}, 178(1):121--171, 2020.

\bibitem{Fulton_Harris}
W.~Fulton and J.~Harris.
\newblock {\em Representation Theory. A First Course, volume 129 of Graduate
  Texts in Mathematics}.
\newblock Springer-Verlag, 1991.

\bibitem{johansson_2}
K.~Johansson.
\newblock Shape fluctuations and random matrices.
\newblock {\em Comm. Math. Phys.}, 209:437–476, 2000.

\bibitem{johansson2010}
K. Johansson.
\newblock A multi-dimensional {M}arkov chain and the {M}eixner ensemble.
\newblock {\em Ark. Mat.}, 48(1):79--95, 04 2010.

\bibitem{kelly}
F.~P. Kelly.
\newblock {\em Reversibility and Stochastic Networks}.
\newblock Cambridge University Press, USA, 2011.

\bibitem{konig2005}
W. König.
\newblock Orthogonal polynomial ensembles in probability theory.
\newblock {\em Probab. Surveys}, 2:385--447, 2005.

\bibitem{konig2002}
W. König, N. O'Connell, and S. Roch.
\newblock Non-colliding random walks, tandem queues, and discrete orthogonal
  polynomial ensembles.
\newblock {\em Electron. J. Probab.}, 7:24 pp., 2002.

\bibitem{lecouvey}
C. Lecouvey, E. Lesigne, and M. Peigné.
\newblock Random walks in {W}eyl chambers and crystals.
\newblock {\em Proceedings of the London Mathematical Society},
  104(2):323--358, 2012.

\bibitem{remenik_nguyen}
G.B. Nguyen and D. Remenik.
\newblock Non-intersecting {B}rownian bridges and the {L}aguerre {O}rthogonal
  {E}nsemble.
\newblock {\em Ann. Inst. H. Poincaré Probab. Statist.}, 53(4):2005--2029,
  2017.

\bibitem{o_connell_2003}
N. O'Connell.
\newblock Conditioned random walks and the {RSK} correspondence.
\newblock {\em Journal of Physics A: Mathematical and General},
  36(12):3049--3066, 2003.

\bibitem{o_connell2012}
N. O’Connell.
\newblock Directed polymers and the quantum {T}oda lattice.
\newblock {\em Ann. Probab.}, 40(2):437--458, 03 2012.

\bibitem{prahofer_spohn}
M.~Pr{\"a}hofer and H.~Spohn.
\newblock Scale invariance of the {PNG} droplet and the {A}iry process.
\newblock {\em J. Stat. Phys.}, 108:1071–1106, 2002.

\bibitem{rogers1981}
L.~C.~G. Rogers and J.~W. Pitman.
\newblock Markov functions.
\newblock {\em Ann. Probab.}, 9(4):573--582, 08 1981.

\bibitem{sasamoto}
T.~Sasamoto.
\newblock Spatial correlations of the 1d {KPZ} surface on a flat substrate.
\newblock {\em Journal of Physics A: Math. and Gen.}, 38(33):L549, 08 2005.

\bibitem{schutz}
G.~M. Schutz.
\newblock Exact solution of the master equation for the asymmetric exclusion
  process.
\newblock {\em J. Statist. Phys.}, 88(1-2):427 -- 445, 1997.

\bibitem{tracy_widom_asep}
C.~A. Tracy and H. Widom.
\newblock Asymptotics in {ASEP} with step initial condition.
\newblock {\em Communications in Mathematical Physics}, 290(1):129--154, 2009.

\bibitem{warren}
J. Warren.
\newblock Dyson's {B}rownian motions, intertwining and interlacing.
\newblock {\em Electron. J. Probab.}, 12:573--590, 2007.

\bibitem{warren_windridge}
J. Warren and P. Windridge.
\newblock Some examples of dynamics for {G}elfand-{T}setlin patterns.
\newblock {\em Electron. J. Probab.}, 14:1745--1769, 2009.

\end{thebibliography}

\end{document}